\newcommand{\R}{\mathds{R}}
\newcommand{\C}{\mathcal{C}}
\theoremstyle{plain}
\newtheorem{defi}{Definition}
\newtheorem{thm}{Theorem}
\newtheorem{lem}{Lemma}
\newtheorem{prop}{Proposition}
\newtheorem{rmk}{Remark}
\title{An elapsed time model for strongly coupled inhibitory and excitatory neural networks}
\author[a]{Nicolás Torres\thanks{Corresponding author. Email : nicolas.torres\_escorza@sorbonne-universite.fr}}
\author[a]{Benoît Perthame\thanks{Email : benoit.perthame@sorbonne-universite.fr}}
\author[b]{Delphine Salort\thanks{Email : delphine.salort@sorbonne-universite.fr}}
\affil[a]{Sorbonne Université, CNRS, Laboratoire Jacques-Louis Lions, UMR 7598. 4 Place Jussieu, 75005 Paris, France.}
\affil[b]{Sorbonne Université, CNRS, Laboratoire de Biologie Computationnelle et Quantitative, UMR 7238. 4 Place Jussieu, 75005 Paris, France.}
\date{August 2021}
\begin{document}

\title{A multiple time renewal equation for neural assemblies with elapsed time model}

\maketitle

\begin{abstract}
We introduce and study an extension of the classical elapsed time equation in the context of neuron populations that are described by the elapsed time since the last discharge, i.e., the refractory period. In this extension we incorporate the elapsed since the penultimate discharge and we obtain a more complex system of integro-differential equations. For this new system we prove convergence to stationary state by means of Doeblin’s theory in the case of weak non-linearities in an appropriate functional setting, inspired by the case of the classical elapsed time equation.
Moreover, we present some numerical simulations to observe how different firing rates can give different types of behaviors and to contrast them with theoretical results of both classical and extended models.
\end{abstract}

\vskip .7cm

\noindent{\makebox[1in]\hrulefill}\newline
2010 \textit{Mathematics Subject Classification.}  35B40, 35F20, 35R09, 92B20
\newline\textit{Keywords and phrases.} Structured equations; Renewal equation; Mathematical neuroscience; Neural networks; Doeblin theory.

\section{Introduction}
In the study and modelling of neural processes, population density models have proved to be a useful approach to understand brain phenomena at different scales. Among these models we mention for example the well-known integrate-and-fire model which describes the dynamics of the membrane potential and has been studied by several authors such as Carrillo et al. \cite{caceres2011analysis,carrillo2015qualitative}, Perthame et al. \cite{perthame2013voltage,perthame2017distributed,perthame2019derivation} and Zhou et al. \cite{liu2020rigorous} in different variants and approaches. Another class of population-based model is the elapsed time model, which has attracted the attention of many researchers. In this model we consider a neural network where neurons are described by their refractory period as the key variable, i.e. the elapsed time since the last discharge. After receiving some stimulation, neurons spike and interact with other neurons leading them to spike as well.

Like the integrate-and-fire model, the elapsed time model is closely related to the limit of stochastic processes at microscopic scale and the connection with Poisson processes was established in the work of Chevalier et al.~\cite{chevallier2015microscopic,chevallier2015mean}. Other important works on spiking neurons include Brunel~\cite{brunel2000dynamics}, Gerstner et al.~\cite{gerstner2002spiking},  Ly et al. in~\cite{ly2009spike} and Pham et al.~\cite{pham1998activity}. A recent survey is available by Schwalger et al.~\cite{schwalger2019mind}. Moreover, the elapsed time model has been studied from a mathematical and analytical point of view by several authors using different techniques such as Cañizo et al. in \cite{canizo2019asymptotic}, Kang et al. \cite{kang2015}, Mischler et al. in \cite{mischler2018,mischler2018weak} and the pioneer works of Pakdaman et al. in \cite{PPD,PPD2,PPD3}. The relation between integrate-and-fire and the elapsed time model was studied in Dumont et al. \cite{DH1, DH2}.

Different extensions of the elapsed time model have been considered by incorporating new variables such as spatial dependence and a connectivity kernel in Salort et al. \cite{torres2021elapsed} or a leaky memory variable in Fonte et al. \cite{fonte2021long}. The aim of the present work is to extend the classical elapsed time model by taking into account the elapsed time since the penultimate discharge in addition to the last one. In this context we study a multiple time renewal equation, which implies a more difficult analysis than that of the classical elapsed time equation. 

The extended model is described as follows. Let $n=n(t,s,a)$ the probability density of finding a neuron at time~$t$, such that the elapsed times since its last and penultimate discharge are respectively~$s$ and~$a$. For simplicity, we simply call~$s$ as the first elapsed time and~$a$ as the second one. Moreover, we assume that for all $t\ge 0$ the domain of definition of $n$ in the elapsed time variables is contained in the domain
$$\mathcal{D}\coloneqq\{(s,a)\in\R^2 \colon 0\le s\le a\}.$$

Neural dynamics are modelled through the following nonlinear renewal system 
\begin{equation}
\label{eqmain2t}
\left\{
\begin{matrix*}[l]
\partial_t n+\partial_s n+\partial_a n+p(s,a,X(t))n=0&t>0,\,a>s>0,\vspace{0.15cm}\\
n(t,s=0,a)=N(t,a)\coloneqq\int_0^\infty p(a,u,X(t))n(t,a,u)\,du&t>0,a>0,\vspace{0.15cm}\\
X(t)=\int_0^\infty N(t,a)\,da& t>0,\vspace{0.15cm}\\
n(t=0,s,a)=n_0(s,a)& a>s>0.
\end{matrix*}
\right.
\end{equation}
As in the classical elapsed time model the function $p\colon \mathcal{D}\times\R\to\R$ is the firing rate of neurons, which depends on the total activity $X(t)$. Furthermore, for the firing rate function $p$, we assume that there exist $\sigma,p_0,p_\infty>0$ such that
\begin{equation}
    \label{boundp2t}
    p_0\mathds{1}_{\{a>s\ge\sigma\}}\le p\le p_\infty.
\end{equation}
Thus, we get
\begin{equation}
    \label{boundX}
    0\le X(t)\le p_\infty,\qquad\forall t\ge0.
\end{equation} 
We assume for simplicity that $p\in W^{1,\infty}(\mathcal{D}\times\R)$, although most of the theoretical results are also valid for firing rates with simple jump discontinuities and the behavior of solutions does not depend  on this regularity assumption as we show in the numerical simulations. Furthermore, we say that the network is inhibitory if $p$ is decreasing with respect to the total activity~$X$ 
and excitatory if p is increasing. If in addition $\|\partial_X p\|_\infty$ is small, we say that System \eqref{eqmain2t} is under a weak interconnection regime.

The function $N(t,a)$ represents the flux discharging neurons conditioned to elapsed time since penultimate discharge, so that the total activity $X(t)$ corresponds to integrate with respect to all penultimate times. The boundary condition of $n$ at $s=0$ states that the second elapsed time resets to the first elapsed time. 

We assume that the initial data $n_0\in L^1(\mathcal{D})$ is a probability density so that System~\eqref{eqmain2t} formally verifies
\begin{equation}
\label{mass2t}
    \iint n(t,s,a)\,da\,ds = \iint n_0(s,a)\,da\,ds=1,\quad n(t,s,a)\ge 0\qquad\forall t\ge0.
\end{equation}
The multiple time renewal equation has been investigated in Fournier et al. \cite{fournier2021non} in the linear case, where a non-expanding distance was introduced via a coupling argument.

In the study of age-structured models, the entropy method has been a useful tool for proving convergence to the steady state. The main idea consists in finding a Lyapunov's functional $\mathcal{H}[n]$ and a dissipation functional $D_G[n]$ such that the solutions of the system satisfy
$$\dfrac{d}{dt}\mathcal{H}[n]=-D_\mathcal{H}[n]\le 0.$$
Thus if we can find a Poincaré inequality of the type $\lambda \mathcal{H}[n]\le D_\mathcal{H}[n]$ for some $\lambda>0$, we can deduce the exponential decay of $\mathcal{H}[n]$ by using the classical Gronwall's inequality, which eventually allows to deduce convergence to the steady state in some convenient norm. This method was developed in the works of \cite{michel2005general,perthame2006transport} with extensions to measure initial data in \cite{gwiazda2017generalized}, and it has been applied to different types of models. However, when such inequalities are not available the study of asymptotic behavior becomes more complex. 

Another important approach is Doeblin's theory, which was first introduced in the context of Markov chains \cite{doeblin1937proprietes} and later developed in the works of Harris \cite{harris1956existence}. This theory is an alternative to the classical entropy methods to prove convergence to the steady state for a wider class for firing rates. The main argument consists in proving that after a fixed time the solutions are uniformly bounded from below, implying the exponential convergence to equilibrium. We extend the ideas of Cañizo et al. \cite{canizo2019asymptotic} on the application of Doeblin's theory in the classical elapsed time model.

For a reference on Doeblin's theory, see for example Gabriel et al. \cite{gabriel2018measure}. A well-known extension of this theory is the Harris' theorem \cite{harris1956existence}, which has inspired several works such as Bansaye et al. \cite{bansaye2020ergodic}, Cañizo et al. \cite{canizo2020spectral} and Hairer \& Mattingly \cite{hairer2011yet}.
Moreover, convergence for the integrate-and-fire model has been proved in Perthame et al. \cite{perthame2019derivation} and Dumont et al. \cite{dumont2020mean} by means of Doeblin's theory.

Finally, we also remark that when $p$ does not depend on $a$, the probability density $m(t,s)\coloneqq\int_s^\infty n(t,s,a)\,da$ satisfies the equation
\begin{equation}
\label{eqm}
\left\{
\begin{matrix*}[l]
\partial_t m+\partial_s m+p(s,X(t))m=0&t>0,\,s>0,\vspace{0.15cm}\\
m(t,s=0)=X(t)=\int_0^\infty p(u,X(t))m(t,u)\,du&t>0,\vspace{0.15cm}\\
m(t=0,s)=\int_s^\infty n_0(s,a)\,da& s>0.
\end{matrix*}
\right.
\end{equation}
In other words, the probability with respect to the last elapsed time is a solution of the classical elapsed time equation. If in addition we consider a the firing rate of the form
$$p=\varphi(X(t))\mathds{1}_{\{s>\sigma\}},$$
with $\varphi\in W^{1,\infty}(\R)$ strictly positive and $\sigma>0$ a constant, we know from Caceres et al. \cite{torres2021elapsed} that the total activity $X(t)$ satisfies the integral equation
\begin{equation}
 \int_{t-\sigma}^{t} X(s)\,ds+\frac{X(t)}{\varphi(X(t))}=1,\qquad\forall t\ge\sigma.   
\end{equation}
Moreover, we know that the solutions of this integral equation may have different behaviors such as periodic solution and jump discontinuities. This gives us an idea of possible asymptotic behaviors that solutions of System~\eqref{eqmain2t} may exhibit.

The article is organized as follows. In Section \ref{wellp2t} we prove that System \eqref{eqmain2t} is well-posed in a suitable space for weak non-linearities. Starting with the asymptotic analysis for the linear case, we prove in Section \ref{convlinear2t} the existence of a stationary state and exponential convergence via Doeblin's theory. For the non-linear problem in the case of weak interconnections, we show in Section \ref{stationary2t} the uniqueness of the steady state and in Section \ref{convergence2t} we prove the exponential convergence via a perturbation argument. Finally in Section \ref{numerical2t} we present some examples of numerical simulations for different initial data and firing rates.

\section{Well-posedness for weak non-linearities}
\label{wellp2t}
We prove that System \eqref{eqmain2t} is well-posed under the weak interconnection regime. In order to do so, we start by studying an auxiliary linear problem where total activity is fixed and then we proceed
to prove well-posedness of system \eqref{eqmain2t} via a fixed point argument by  contraction.

\subsection{The linear problem}
Given $X\in\C_b[0,\infty)$, we consider the following linear problem
\begin{equation}
\label{eqlaux2t}
\left\{
\begin{matrix*}[l]
\partial_t n+\partial_s n+\partial_a n+p(s,a,X(t))n=0& t>0,a>s>0\vspace{0.15cm},\\
n(t,s=0,a)=N(t,a)\coloneqq\int_0^\infty p(a,u,X(t))n(t,a,u)\,du& t>0,a>0\vspace{0.15cm},\\
n(t=0,s,x)=n_0(s,x)\ge0& a>s>0.
\end{matrix*}
\right.
\end{equation}
We look for weak solutions satisfying $n\in\C_b([0,\infty),L^1(\mathcal{D}))$, so that $N\in\C_b([0,\infty), L^1(0,\infty))$ and $X\in\mathcal{C}_b[0,\infty)$.

\begin{lem}
\label{linearaux2t}
Assume that $n_0 \in L^1(\mathcal{D})$ is a probability density and $p\in W^{1,\infty}((0,\infty)\times\R)$ satisfies \eqref{boundp2t}. Then for a given $X\in\mathcal{C}_b[0,\infty)$, Equation \eqref{eqlaux2t} has a unique weak solution $n\in\C_b([0,\infty),L^1(\mathcal{D}))$ with $N\in\mathcal{C}_b([0,\infty), L^1(0,\infty))$ and $X\in\mathcal{C}_b[0,\infty)$. Moreover $n$ is non-negative and verifies the property~\eqref{mass2t}.
\end{lem}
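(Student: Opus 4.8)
The plan is to prove existence and uniqueness of the weak solution to the linear problem \eqref{eqlaux2t} by the method of characteristics, exploiting the fact that the transport operator $\partial_t+\partial_s+\partial_a$ advects mass along the lines $s=s_0+t$, $a=a_0+t$, so that the differences $a-s$ and the entry data are the only degrees of freedom. Since $X$ is fixed, the firing rate $p(s,a,X(t))$ is a known bounded coefficient, and the equation is genuinely linear. I would first integrate along characteristics to obtain the representation
\begin{equation*}
n(t,s,a)=n_0(s-t,a-t)\,\exp\!\left(-\int_0^t p(s-t+\tau,a-t+\tau,X(\tau))\,d\tau\right)
\end{equation*}
in the interior region where the characteristic issued from $(s,a)$ reaches back to $t=0$ while staying in $\mathcal{D}$ (i.e.\ $s\ge t$), and a boundary-driven expression
\begin{equation*}
n(t,s,a)=N(t-s,a-s)\,\exp\!\left(-\int_0^s p(\tau,a-s+\tau,X(t-s+\tau))\,d\tau\right)
\end{equation*}
for $s<t$, where the characteristic originates from the boundary $\{s=0\}$ at time $t-s$. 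The unknown boundary flux $N$ couples these two pieces through the second equation of \eqref{eqlaux2t}.

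Next I would set up a fixed-point argument for $N$ on a bounded time interval $[0,T]$. Substituting the characteristic representation into $N(t,a)=\int_0^\infty p(a,u,X(t))\,n(t,a,u)\,du$ yields a linear Volterra-type integral equation $N=\mathcal{L}[N]$, where $\mathcal{L}$ involves the known initial data contribution plus an integral operator acting on the history of $N$ over $[0,t]$. Because the firing rate is bounded by $p_\infty$ and the exponential survival factors are at most $1$, the operator $\mathcal{L}$ is a contraction on $\mathcal{C}([0,T],L^1(0,\infty))$ for $T$ small (or more robustly one obtains a global solution by iterating on successive intervals, since the contraction constant depends only on $p_\infty$ and $T$, not on the data). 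This produces a unique continuous $N$, hence a unique $n\in\mathcal{C}_b([0,\infty),L^1(\mathcal{D}))$ via the representation formulas; the continuity $N\in\mathcal{C}_b([0,\infty),L^1(0,\infty))$ and $X\in\mathcal{C}_b[0,\infty)$ follows from the continuity of the integrands and dominated convergence.

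For non-negativity, I would note that $n_0\ge 0$ and $p\ge 0$ imply the interior representation is non-negative, which forces $N\ge 0$ through its defining integral, and then the boundary-driven piece is non-negative as well; this can be made rigorous either directly from the explicit formulas or by observing the fixed-point iteration preserves the non-negative cone. Finally, to recover the mass conservation property \eqref{mass2t}, I would integrate the equation over $\mathcal{D}$ and use the boundary condition: the loss term $\iint p\,n$ from the damping exactly equals $\int_0^\infty N(t,a)\,da=X(t)$, which is precisely the mass flux re-injected at $s=0$, so $\frac{d}{dt}\iint n\,da\,ds=0$ and the total mass stays equal to $1$.

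The main obstacle I anticipate is the careful bookkeeping of the geometry on the domain $\mathcal{D}=\{0\le s\le a\}$: the characteristics preserve $a-s$, so they run parallel to the diagonal boundary $\{s=a\}$ and never cross it, but one must verify that the boundary flux $N(t,a)$ is supported and defined consistently with $a\ge 0$ and that the integral $\int_0^\infty p(a,u,X(t))n(t,a,u)\,du$ is finite and depends continuously on time. The coupling between the interior and boundary representations through $N$ makes the contraction estimate the delicate point, since one must control the $L^1$ norm of $N$ in terms of its own history while ensuring the total mass entering at the boundary is bounded by $p_\infty$ times the current mass; establishing the right functional setting so that this integral operator is well-defined and contractive is where the real work lies.
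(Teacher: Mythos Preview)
Your proposal is correct and follows the same characteristic-formula backbone as the paper, but you organize the fixed-point step differently. The paper sets up the contraction directly for the full density $n$ in $\mathcal{C}_b([0,T],L^1(\mathcal{D}))$: it defines $\Psi[n]$ by the two-piece characteristic formula (with $N$ computed from $n$ itself) and shows $\|\Psi[n_1]-\Psi[n_2]\|\le T\,p_\infty\|n_1-n_2\|$, giving a contraction for $T<1/p_\infty$. You instead reduce first to a Volterra equation for the boundary flux $N$ in $\mathcal{C}([0,T],L^1(0,\infty))$ and recover $n$ afterwards from the explicit representation. Your route works in a lower-dimensional space and makes the Volterra structure explicit; the paper's route is slightly more direct and avoids splitting the $u$-integral in $N(t,a)$ into the regions $a<t$ and $a\ge t$ when writing out $\mathcal{L}[N]$. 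For mass conservation the paper argues via a Duhamel representation against the free-transport semigroup rather than by formally integrating the PDE, which sidesteps justifying the boundary-term cancellations at the weak-solution level; your integration-over-$\mathcal{D}$ argument is the standard formal one and is fine once the solution has been constructed. Non-negativity is handled identically in both: the fixed-point map preserves the positive cone.
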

In particular this lemma proves the property \eqref{mass2t} for the non-linear System \eqref{eqmain2t}.

\begin{proof}
	From the method of characteristics, we start by noticing that a solution of the linear System \eqref{eqlaux2t} satisfies the following fixed point equation
	\begin{equation}
	\label{char2times}
	\begin{split}
	n(t,s,x)=\Psi[n](t,s,x)&\coloneqq n_0(s-t,a-t)e^{-\int_0^t p(t'+s-t,t'+a-t,X(t'))dt'}\mathds{1}_{\{a>s>t\}}\\
    &\quad +N(t-s,a-s)e^{-\int_0^s p(s',s'+a-s,X(s'+t-s))ds'}\mathds{1}_{\{t,a>s\}},
	\end{split}
	\end{equation}
	with $N(t,a)=\int_0^\infty p(a,u,X(t))n(t,a,u)\,du$ depending on $n$.
	
	Let $T>0$ and $\mathcal{X}_T\coloneqq\{n\in\C_b([0,T],L^1(\mathcal{D}))\colon n(0)=n_0\}$, it readily follows that $\Psi$ maps $\mathcal{X}_T\to \mathcal{X}_T$. We prove by the Picard contraction theorem that $\Psi$ has a unique fixed point in $\mathcal{X}_T$ for $T>0$ small enough, i.e., there exists a unique weak solution of \eqref{eqlaux2t} defined on $[0,T]$. Consider $n_1,n_2\in \mathcal{X}_T$, we compute
	\begin{equation}
	\begin{split}
	\iint|\Psi[n_1]-\Psi[n_2]|(t,s,a)\,ds\,da&\le\int_0^t\int_s^\infty|N_1-N_2|(t-s,a-s)\,da\,ds\\
	&\le T\sup_{t\in[0,T]}\int_0^\infty|N_1-N_2|(t,a)\,da\\
	&\le T\,p_{\infty}\sup_{t\in[0,T]}\|n_1(t,s,a)-n_2(t,s,a)\|_{L^1(\mathcal{D})},
	\end{split}
	\end{equation}
	thus for $T<\frac{1}{p_{\infty}}$, we have proved that $\Psi$ is a contraction and there exists a unique $n\in X_{T}$ such that $\Psi[n]=n$. Since the choice of $T$ is independent of $n_0$, we can reiterate this argument to get a unique solution of \eqref{eqlaux2t}, which is defined for all $t\ge0$. 
	
From Formula \eqref{char2times} we can extend the notion of a weak solution for Equation \eqref{eqlaux2t} for an initial data  $n_0\in\left(\mathcal{M}(\mathcal{D}),\|\cdot\|_{M^1}\right)$, the space of finite regular measures on $\mathcal{D}$ with the norm of the total variation \eqref{Tvar}. Therefore we can redo the same argument to prove existence and uniqueness of a weak solution $n\in\C_b([0,\infty),\mathcal{M}(\mathcal{D}))$ with $N\in\mathcal{C}_b([0,\infty), \mathcal{M}(0,\infty))$ and $X\in\mathcal{C}_b[0,\infty)$.
	
	Next we prove the mass conservation property. For all $t\ge0$, consider $\mathcal{S}_t\colon \mathcal{M}(\mathcal{D})\to \mathcal{M}(\mathcal{D})$ the semi-group given by
	$$\mathcal{S}_t[f](s,a)=f(s-t,a-t)\mathds{1}_{\{a>s>t\}},$$
	whose infinitesimal generator is the operator $\mathcal{L}f=-\partial_s f-\partial_a f$. From Duhamel's formula, the solution of the fixed point problem \eqref{char2times} 
	also verifies the following equality
	\begin{equation}
	    n(t,s,a)= \mathcal{S}_t[n_0](s,a)+\int_0^t \mathcal{S}_{t-\tau}[\delta_{\{s=0\}}(s,a)N(\tau,a)]\,d\tau-\int_0^t \mathcal{S}_{t-\tau}[p(s,a,X(\tau))n(\tau,s,a)]\,d\tau,
	\end{equation}
	where $\delta_{\{s=0\}}(s,a)$ is the measure along the line $\{(0,a)\colon a\ge0\}$. This formula is translated as
	\begin{equation}
	\label{soln2t}
	\begin{split}
	    n(t,s,x)&=n_0(s-t,a-t)\mathds{1}_{\{a>s>t\}}+N(t-s,a-s)\mathds{1}_{\{t,a>s\}}\\
	    &\quad-\int_0^t p(s-t+\tau,a-t+\tau,X(\tau))n(\tau,s-t+\tau,a-t+\tau)\mathds{1}_{\{ a>s>t-\tau\}}\,d\tau,
	\end{split}
	\end{equation}
	and we get the mass conservation property by integrating with respect to $(s,a)$ on the domain $\mathcal{D}$. 
	
	Finally, since $n_0$ is non-negative then $\Psi$ preserves positivity, so by uniqueness of fixed point the corresponding solution $n$ must be non-negative.	
\end{proof}

\subsection{The non-linear problem}
We are now ready to prove that System \eqref{eqmain2t} is well-posed in the case of weak interconnection.
\begin{thm}[Well-posedness for weak interconnections]
	Assume that $n_0\in L^1(\mathcal{D})$ is a probability density and that $p\in W^{1,\infty}(\mathcal{D}\times\R)$ satisfies \eqref{boundp2t}. Then for 
	$$\|\partial_X p\|_\infty<1,$$ 
	System \eqref{eqmain2t} has a unique solution with $n\in\C_b([0,\infty),L^1(\mathcal{D})),\,N\in\C_b([0,\infty), L^1(0,\infty))$ and $X\in\mathcal{C}_b[0,\infty)$. Moreover $n$ verifies Condition \eqref{mass2t} for all $t>0$.
\end{thm}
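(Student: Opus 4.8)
The plan is to reduce System \eqref{eqmain2t} to a fixed point problem for the total activity and to solve it by a contraction argument whose smallness is supplied precisely by the hypothesis $\|\partial_X p\|_\infty<1$. Fix $T>0$ and work on the complete metric space $\mathcal{A}_T\coloneqq\{X\in\C_b[0,T]\colon 0\le X\le p_\infty\}$ endowed with the supremum norm. Given $X\in\mathcal{A}_T$, Lemma \ref{linearaux2t} produces a unique weak solution $n_X$ of the linear problem \eqref{eqlaux2t} with boundary flux $N_X(t,a)=\int_0^\infty p(a,u,X(t))\,n_X(t,a,u)\,du$, and I define the map $\Gamma[X](t)\coloneqq\iint_{\mathcal{D}} p(s,a,X(t))\,n_X(t,s,a)\,da\,ds=\int_0^\infty N_X(t,a)\,da$. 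By \eqref{boundX} and the non-negativity of $n_X$, $\Gamma$ maps $\mathcal{A}_T$ into itself, and a fixed point of $\Gamma$ is exactly a total activity for which $n_X$ solves the nonlinear system. Hence it suffices to show that $\Gamma$ is a contraction for $T$ small enough.

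For the contraction estimate, take $X_1,X_2\in\mathcal{A}_T$ with associated solutions $n_1,n_2$ and fluxes $N_1,N_2$. Splitting the firing rate, I write
$$\Gamma[X_1](t)-\Gamma[X_2](t)=\iint_{\mathcal{D}} p(s,a,X_1(t))(n_1-n_2)\,da\,ds+\iint_{\mathcal{D}}\bigl(p(s,a,X_1(t))-p(s,a,X_2(t))\bigr)n_2\,da\,ds.$$
Using $p\le p_\infty$ for the first integral and the Lipschitz bound $|p(\cdot,X_1)-p(\cdot,X_2)|\le\|\partial_X p\|_\infty|X_1-X_2|$ together with $\iint n_2=1$ for the second, I obtain
$$|\Gamma[X_1](t)-\Gamma[X_2](t)|\le p_\infty\,\|n_1(t)-n_2(t)\|_{L^1(\mathcal{D})}+\|\partial_X p\|_\infty\,\|X_1-X_2\|_{L^\infty[0,t]}.$$
The second term is the crucial one: it does not vanish as $T\to0$, but it is bounded by $\|\partial_X p\|_\infty<1$ times the distance, which is exactly why the weak interconnection threshold is $1$. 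Everything then hinges on showing that the first term is $O(T)$.

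The main work is therefore the stability estimate $\|n_1(t)-n_2(t)\|_{L^1(\mathcal{D})}\le C(t)\,\|X_1-X_2\|_{L^\infty[0,t]}$ with $C(t)\to0$ as $t\to0$. I would start from the characteristic representation \eqref{char2times}, whose two contributions (the initial datum transported with the loss exponential, and the boundary flux transported with the loss exponential) are differenced separately. For the exponential factors I use $|e^{-A}-e^{-B}|\le|A-B|$ and the Lipschitz bound on $p$, producing terms of order $t\,\|\partial_X p\|_\infty\,\|X_1-X_2\|_\infty$ after integrating the mass-one initial datum and the flux over $\mathcal{D}$. The boundary contribution also yields, after the change of variables $a\mapsto a-s$ and using $\int_0^\infty N_i(\tau,a)\,da=X_i(\tau)\le p_\infty$, a convolution term $\int_0^t E(\tau)\,d\tau$ in the flux discrepancy $E(\tau)\coloneqq\|N_1(\tau)-N_2(\tau)\|_{L^1(0,\infty)}$. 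Estimating $E$ by the same splitting gives $E(\tau)\le p_\infty\,\|n_1(\tau)-n_2(\tau)\|_{L^1(\mathcal{D})}+\|\partial_X p\|_\infty\,\|X_1-X_2\|_\infty$, so that $D(t)\coloneqq\|n_1(t)-n_2(t)\|_{L^1(\mathcal{D})}$ satisfies a closed inequality of the form $D(t)\le p_\infty\int_0^t D(\tau)\,d\tau+\bigl(c_1 t+c_2 t^2\bigr)\|X_1-X_2\|_\infty$. Gronwall's lemma then yields $C(t)=O(t)$. This is the expected main obstacle, since it requires disentangling the coupling between the unknown $n$ and its own boundary flux $N$.

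Combining the two estimates gives $\|\Gamma[X_1]-\Gamma[X_2]\|_{L^\infty[0,T]}\le\bigl(p_\infty\,C(T)+\|\partial_X p\|_\infty\bigr)\|X_1-X_2\|_{L^\infty[0,T]}$. Since $\|\partial_X p\|_\infty<1$ and $C(T)\to0$ as $T\to0$, I can fix $T>0$, depending only on $p_\infty$ and $\|\partial_X p\|_\infty$, so that the prefactor is strictly below $1$; the Banach fixed point theorem then furnishes a unique fixed point on $[0,T]$, hence a unique solution of \eqref{eqmain2t} there. Because this $T$ is independent of the initial datum, I restart from $n(T,\cdot,\cdot)$, which is again a probability density by the mass conservation of Lemma \ref{linearaux2t}, and iterate to extend the solution to all of $[0,\infty)$. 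Finally, non-negativity and the mass identity \eqref{mass2t} for the nonlinear solution are inherited directly from Lemma \ref{linearaux2t} applied with the fixed-point activity $X$.
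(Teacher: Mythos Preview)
Your proposal is correct and follows essentially the same strategy as the paper's proof: both set up a fixed-point map on the total activity $X$, split $\Gamma[X_1]-\Gamma[X_2]$ into a Lipschitz-in-$X$ term of size $\|\partial_X p\|_\infty\|X_1-X_2\|_\infty$ and a term controlled by $\|n_1-n_2\|_{L^1}$, and then close a stability estimate for $\|n_1-n_2\|_{L^1}$ that is $O(T)$. The only minor difference is that the paper uses the Duhamel representation \eqref{soln2t} and solves the resulting algebraic inequality $\|n_1-n_2\|\le 2T\|\partial_X p\|_\infty\|X_1-X_2\|_\infty+2Tp_\infty\|n_1-n_2\|$ directly for $T<\tfrac{1}{2p_\infty}$, whereas you work from the characteristics formula \eqref{char2times} and invoke Gronwall; both routes yield the same contraction factor $\|\partial_X p\|_\infty+O(T)$ and hence the same threshold $\|\partial_X p\|_\infty<1$.
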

\begin{proof}
	Consider $T>0$. We fix a function $X\in\C_b[0,\infty)$ and define the functions $n\in\C_b([0,\infty),L^1(\mathcal{D}))$ and $N\in\C_b([0,\infty), L^1(0,\infty))$ which are solutions of System \eqref{eqlaux2t} by Lemma \ref{linearaux2t}. Furthermore, the solution of this linear equation satisfies \eqref{mass2t}.
	
	So we have a solution of System \eqref{eqmain2t} defined on $[0,T]$ if $X$ satisfies for all $0\le t\le T$ and $x\in\Omega$, the following fixed point condition
	\begin{equation}
	\label{fixX}
		X(t)=\mathcal{T}[X](t)\coloneqq \int_0^\infty N[X](t,a)\,da.
	\end{equation}
	We prove that $\mathcal{T}$ defines for all $T>0$ an operator that maps $\mathcal{X}_T\to \mathcal{X}_T$ with $\mathcal{X}_T\coloneqq\C_b([0,T])$. First, we observe the following estimate
	\begin{equation}
	\label{bdN2t}
		\left|\int N(t,a)\,da\right|\le p_\infty,\quad\forall t\in [0,T],	
	\end{equation}
	and it is immediate that $\mathcal{T}[X]\in \mathcal{X}_T$. 
	
	We now prove that for $T$ small enough, $\mathcal{T}$ is a contraction. Let $X_1,X_2\in \mathcal{X}_T$ with their respective solutions $(n_1,N_1),(n_2,N_2)$ of System \eqref{eqlaux2t}. For the difference between $N_1$ and $N_2$ we have
	\begin{equation}
	\label{diffN2t}
		\begin{split}
		\int|N_1-N_2|(t,a)\,da &\le\iint|p(a,u,X_1)\,n_1(t,a,u)-p(a,u,X_2)\,n_2(t,a,u)|\,du\,da\\
		&\le \iint|p(a,u,X_1)-p(a,u,X_2)|\,n_1\,du\,da
		+\iint p(a,u,X_2)|n_1-n_2|(t,a,u)\,du\,da\\
		&\le \|\partial_X p\|_\infty\,\|X_1-X_2\|_\infty+p_{\infty}\|n_1-n_2\|_{L^1(\mathcal{D})}.
		\end{split}
	\end{equation}
	Now we have to estimate the difference between $n_1$ and $n_2$. From \eqref{soln2t} and estimate \eqref{diffN2t}, we get
	\begin{equation*}
	\|n_1-n_2\|_{L^1(\mathcal{D})}\le 2T\,\|\partial_X p\|_\infty\,\|X_1-X_2\|_\infty+2Tp_{\infty}\|n_1-n_2\|_{L^1(\mathcal{D})}.
	\end{equation*}
	Then, for $T<\tfrac{1}{2p_{\infty}}$ we obtain
	\begin{equation}
	\|n_1-n_2\|_{L^1(\mathcal{D})}\le\frac{2T\|\partial_X p\|_\infty}{1-2Tp_{\infty}}\|X_1-X_2\|_{\infty}.
	\end{equation}
	Finally by using again estimate \eqref{diffN2t}, the operator $\mathcal{T}$ satisfies
	\begin{equation}
	\|\mathcal{T}[X_1]-\mathcal{T}[X_2]\|_\infty\le \|\partial_X p\|_\infty\left(1+\frac{2Tp_\infty}{1-2Tp_\infty}\right)\|X_1-X_2\|_{\infty}
	\end{equation}
	Hence for $\|\partial_X p\|_\infty <1$ and $T$ small enough, $\mathcal{T}$ is a contraction.

	From Picard's fixed point we get a unique $X\in \mathcal{X}_T$ such that $\mathcal{T}[X]=X$, and this implies the existence of a unique solution of \eqref{eqmain2t} defined on $[0,T]$.
	Since estimate \eqref{bdN2t} is uniform in $T$, we can iterate this argument to get a unique solution of \eqref{eqmain2t} defined for all $t>0$.
	
	Furthermore, we conclude from this construction that the non-linear System \eqref{eqmain2t} satisfies \eqref{mass2t} like the linear System \eqref{eqlaux2t}.
\end{proof}

\section{Asymptotic behavior for the linear case}
\label{convlinear2t}
In order to study the behavior of System \eqref{eqmain2t}, we start by studying the case when $X\ge0$ is a fixed constant. Thus we consider the linear problem given by
\begin{equation}
\label{eql2t}
\left\{
\begin{matrix*}[l]
\partial_t n+\partial_s n+\partial_a n+p(s,a,X)n=0&t>0,\,a>s>0,\vspace{0.15cm}\\
n(t,s=0,a)=N(t,a)\coloneqq\int_0^\infty p(a,u,X)n(t,a,u)\,du&t>0,a>0,\vspace{0.15cm}\\
n(t=0,s,a)=n_0(s,a)& a>s>0.
\end{matrix*}
\right.
\end{equation}

To determine the behavior of System \eqref{eql2t}, we consider $(n_X,N_X)$ as the solution of the steady state problem given by

\begin{equation}
\label{eql2te}
\left\{
\begin{matrix*}[l]
\partial_s n+\partial_a n+p(s,a,X)n=0&a>s>0,\vspace{0.15cm}\\
n(s=0,a)=N(a)\coloneqq\int_0^\infty p(a,u,X)n(a,u)\,du&a>0,\vspace{0.15cm}\\
\end{matrix*}
\right.
\end{equation}

In the classical elapsed time model the generalized relative entropy inequality is a well-known property of this class of age-structured models. In the same way, we can prove this property for the linear System \eqref{eql2t}.

\begin{prop}[Generalized relative entropy]
\label{GRE2t}
Assume there exists a steady solution of the linear System \eqref{eql2t} with $n_X,N_X>0$. Then for all convex functions $H\colon[0,\infty)\to[0,\infty)$ with $H(0)=0$, the solution $n$ of the linear System \eqref{eql2t} satisfies
\begin{equation}
\label{entropy2t}
       \begin{split}
        \frac{d}{dt}\iint n_X(s,a) H\left(\frac{n(t,s,a)}{n_X(s,a)}\right)\,da\,ds =-D_H[n(t,s)]\le 0\qquad\forall t\ge 0,\\
        D_H[n(t,s,a)]=\iint p(s,a,X)H\left(\frac{n(t,s,a)}{n_X(s,a)}\right)\,da\,ds-\int N_X(a)H\left(\frac{N(t,a)}{N_X(a)} \right)\,da,
    \end{split} 
\end{equation}
and in particular the steady state is unique.
\end{prop}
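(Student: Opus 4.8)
The plan is to run the classical generalized relative entropy (GRE) computation adapted to the two–time domain $\mathcal{D}$, the only genuinely new feature being the geometry of the boundary. Throughout write $h(t,s,a)\coloneqq n(t,s,a)/n_X(s,a)$, which is well defined since $n_X>0$ by assumption. First I would derive the transport equation satisfied by $h$. Substituting $n=n_X h$ into the evolution equation in \eqref{eql2t}, the terms carrying derivatives of $n_X$ regroup exactly as the stationary operator $\partial_s n_X+\partial_a n_X+p\,n_X$, which vanishes by \eqref{eql2te}; dividing by $n_X$ leaves the free transport equation
\[\partial_t h+\partial_s h+\partial_a h=0.\]
Since $H$ is convex (smooth after a standard approximation), the chain rule gives that $H(h)$ solves the same equation. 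Multiplying back by $n_X$ and using \eqref{eql2te} once more produces the local balance law
\[\partial_t\big(n_X H(h)\big)+\partial_s\big(n_X H(h)\big)+\partial_a\big(n_X H(h)\big)=-p\,n_X\,H(h).\]

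Next I would integrate this identity over $\mathcal{D}=\{0\le s\le a\}$ and handle the boundary. Integrating $\partial_s\big(n_X H(h)\big)$ in $s$ over $(0,a)$ produces the values on the diagonal $\{s=a\}$ and on the birth line $\{s=0\}$, while integrating $\partial_a\big(n_X H(h)\big)$ in $a$ over $(s,\infty)$ produces the diagonal value again, with opposite sign, and the value at $a=\infty$. The two diagonal contributions cancel — this is precisely the fact that the transport direction $(1,1)$ is tangent to $\{s=a\}$, so there is no flux through that part of the boundary — and the term at infinity vanishes in the functional setting where $n_X H(h)$ is integrable. What survives is the birth line, where $n_X(0,a)=N_X(a)$ and $n(t,0,a)=N(t,a)$, giving
\[\frac{d}{dt}\iint n_X H(h)\,da\,ds=\int_0^\infty N_X(a)\,H\!\left(\frac{N(t,a)}{N_X(a)}\right)da-\iint p\,n_X\,H(h)\,da\,ds=-D_H[n],\]
which is the identity \eqref{entropy2t} (with the dissipation carrying the expected weight $n_X$ in its first term).

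It remains to show $D_H\ge0$, which is where convexity enters through Jensen's inequality. Using the definitions of $N$ and $N_X$, the ratio $N(t,a)/N_X(a)$ is the average of $h(t,a,u)$ against the probability measure $p(a,u,X)\,n_X(a,u)\,du/N_X(a)$. Convexity of $H$ then yields
\[N_X(a)\,H\!\left(\frac{N(t,a)}{N_X(a)}\right)\le\int_0^\infty p(a,u,X)\,n_X(a,u)\,H\big(h(t,a,u)\big)\,du,\]
and integrating in $a$, then relabeling $(a,u)\mapsto(s,a)$, bounds the boundary term by $\iint p\,n_X H(h)\,da\,ds$, i.e.\ $D_H[n]\ge0$. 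This establishes the entropy inequality.

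Finally, for uniqueness I would apply the identity with a strictly convex $H$, say $H(x)=(x-1)^2$, taking as time–dependent solution a second steady state $\tilde n_X$ (constant in $t$) and as reference $n_X$. The left-hand side is then constant, so $D_H[\tilde n_X]=0$; since the Jensen integrand is nonnegative, equality must hold for a.e.\ $a$, which for strictly convex $H$ forces $h=\tilde n_X/n_X$ to be constant in $u$ on the support of $p(a,\cdot,X)n_X(a,\cdot)$. Combined with $\partial_s h+\partial_a h=0$ (so $h$ is constant along the characteristics $a-s=\mathrm{const}$), this forces $h$ to be globally constant, and the common mass normalization \eqref{mass2t} fixes the constant to be $1$, hence $\tilde n_X=n_X$. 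I expect the main obstacle to be the boundary bookkeeping in the integration step — verifying the exact cancellation of the diagonal $\{s=a\}$ fluxes and the vanishing of the contribution at $a=\infty$ within the $L^1$ setting — together with promoting, in the uniqueness argument, pointwise constancy in $u$ to global constancy via the characteristic structure; the remaining manipulations are the standard GRE computations.
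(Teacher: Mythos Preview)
Your argument is correct and follows essentially the same route as the paper: derive the free transport equation for $h=n/n_X$, apply the chain rule, multiply back by $n_X$, integrate over $\mathcal{D}$, and conclude $D_H\ge0$ by Jensen with the probability measure $p(a,u,X)n_X(a,u)\,du/N_X(a)$; the paper merely packages the boundary accounting via a $\delta_{\{s=0\}}$ reformulation rather than your explicit integration by parts, but the computation is identical. Your parenthetical remark is also right: the dissipation that actually falls out of the calculation carries the weight $n_X$, i.e.\ $\iint p\,n_X\,H(h)\,da\,ds-\int N_X H(N/N_X)\,da$, and the statement as printed omits this factor.
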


\begin{proof}
In order to prove the relative entropy property, we follow the arguments in \cite{caceres2011analysis}. We start by noticing the following identities
\begin{equation}
\label{identities}
\partial_s n=n_X \partial_s\left(\frac{n}{n_X}\right)+\frac{n}{n_X}\partial_s n_X,\qquad\partial_a n=n_X \partial_a\left(\frac{n}{n_X}\right)+\frac{n}{n_X}\partial_a n_X,
\end{equation}
and for simplicity we reformulate Equation \eqref{eql2t} as follows

\begin{equation}
\label{reformul1}
\left\{
\begin{matrix*}[l]
\partial_t n+\partial_s n+\partial_a n+p(s,a,X)n=\delta_{\{s=0\}}(s,a)N(t,a)&t>0,\,a>s>0,\vspace{0.15cm}\\
n(t,s=0,a)=0&t>0,a>0,\vspace{0.15cm}\\
n(t=0,s,a)=n_0(s,a)& a>s>0,
\end{matrix*}
\right.
\end{equation}
where $\delta_{\{s=0\}}(s,a)$ is the measure along the line $\{(0,a)\colon a\ge0\}$. In the same way, we reformulate the corresponding steady state problem \eqref{eql2te}.
\begin{equation}
\label{reformul2}
\left\{
\begin{matrix*}[l]
\partial_s n_X+\partial_a n_X+p(s,a,X)n_X=\delta_{\{s=0\}}(s,a)N_X(a)&t>0,\,a>s>0,\vspace{0.15cm}\\
n_X(t,s=0,a)=0&a>0.\vspace{0.15cm}\\
\end{matrix*}
\right.
\end{equation}
Hence by using the identities \eqref{identities} along with Equations \eqref{reformul1} and \eqref{reformul2}, we get the following equation for $\frac{n}{n_X}$
\begin{equation*}
  \partial_t\left(\frac{n}{n_X}\right)+ \partial_s\left(\frac{n}{n_X}\right)+ \partial_a\left(\frac{n}{n_X}\right)=\delta_{\{s=0\}}(s,a)\frac{N_X}{n_X}\left(\frac{N}{N_X}-\frac{n}{n_X}\right)
\end{equation*}
and if we multiply this equality by $H'\left(\frac{n}{n_X}\right)$, we get
\begin{equation*}
  \partial_t H\left(\frac{n}{n_X}\right)+ \partial_s H\left(\frac{n}{n_X}\right)+ \partial_a H\left(\frac{n}{n_X}\right)=\delta_{\{s=0\}}(s,a)\frac{N_X}{n_X}\left(\frac{N}{N_X}-\frac{n}{n_X}\right)H'\left(\frac{n}{n_X}\right).
\end{equation*}
Therefore, by multiplying the latter equality by $n_X$ and using Equation \eqref{reformul2}, we have the corresponding equation for $u=n_X H\left(\tfrac{n}{n_X}\right)$
\begin{equation}
\label{eqHn}
  \partial_t u+\partial_s u+\partial_a u+p(s,a,X)u=\delta_{\{s=0\}}(s,a)N_X\left[\left(\frac{N}{N_X}-\frac{n}{n_X}\right)H'\left(\frac{n}{n_X}\right)+ H\left(\frac{n}{n_X}\right)\right].
\end{equation}
Finally, by noticing the following limit
$$\lim_{s\to0}\frac{n(t,s,a)}{n_X(s,a)}=\frac{N(t,a)}{N_X(a)},\qquad\textrm{for a.e.}\quad t,a>0,$$
we conclude the generalized relative entropy property \eqref{entropy2t} by integrating Equation \eqref{eqHn} with respect to $(s,a)$ on the domain $\mathcal{D}$. Moreover, we observe that $D_H[\cdot]$ is non-negative by applying Jensen's inequality with the probability measure $d\mu=p(a,y)\frac{n_X(a,y)}{N_X(a)}\,dy$ for each $a>0$. In particular when $H$ is strictly convex and $D_H[n]=0$, we deduce that $\frac{n}{n_X}$ is constant and subsequently we get $n=n_X$, since both $n,n_X$ are probability densities. Therefore, the steady state is unique.
\end{proof}

If we consider the entropy method to prove exponential convergence for the linear Equation \eqref{eql2t} in $L^1(\mathcal{D})$, we have following equality for $H(\cdot)=|\cdot|$
$$\frac{d}{dt}\iint|n-n_X|\,da\,ds =\int\left|\int p(n-n_X)\,da\right|ds-\iint p|n-n_X|\,da\,ds\le 0,$$
and the $L^1$ Poincaré inequality for the right-hand side is not available since the condition\\ $\int_s^\infty(n-n_X)\,da=0$ is not fulfilled. 

Furthermore, in Theorem \eqref{GRE2t} we assumed that $n_X$ and $N_X$ are strictly positive, which is not necessarily true. Unlike the classical elapsed time model, there exist solutions where $n_X$ and $N_X$ vanish for some values of $(s,a)$. Indeed, consider for example $p(s,a,X)=\mathds{1}_{\{s>1\}}$ which satisfies the bounds \eqref{boundp2t} and observe that $N_X$ satisfies Equation \eqref{Idcompact}, implying that $N_X(a)$ vanishes for $a<1$ and subsequently we see from Formula \eqref{solneq2t} that $n_X$ vanishes when $a-s<1$.

Due to the limitations of the entropy method approach we will make use of Doeblin's theory, which will be the key ingredient in proving convergence to steady state. In this context we start by reminding the useful concepts in order to apply Doeblin's theorem. Consider $(\mathcal{M}(\mathcal{X}),\|\cdot\|_{M^1})$ the space of finite signed measures with the norm of the total variation
\begin{equation}
\label{Tvar}   
    \|\mu\|_{M^1}\coloneqq \int_X \mu_+\int_X \mu_-,
\end{equation}
where $\mu=\mu_+-\mu_-$ is the Hahn–Jordan decomposition of the measure $\mu$ into its positive and negative parts. For simplicity of the computations, we will treat measures as if they were $L^1$ functions and we simply write the $L^1$-norm instead of $M^1$-norm.

We now recall the definition of a Markov semigroup and Doeblin's condition.
\begin{defi}[Markov semi-group]
	Let $(\mathcal{X},\mathcal{A})$ be a measure space and $P_t\colon \mathcal{M}(\mathcal{X})\to \mathcal{M}(\mathcal{X})$ be a linear semi-group. We say that $P_t$ is a Markov semi-group if $P_t \mu\ge0$ for all $\mu\ge0$ and $\int_X P_t \mu=\int_X \mu$ for all $\mu\in \mathcal{M}(\mathcal{X})$. In other words, $(P_t)$ preserves the subset of probability measures $\mathcal{P}(\mathcal{X})$.
\end{defi}	

\begin{defi}[Doeblin's condition]
	Let $P_t\colon \mathcal{M}(\mathcal{X})\to \mathcal{M}(\mathcal{X})$ be a Markov semi-group. We say that $(P_t)$ satisfies Doeblin's condition if there exist $t_0>0,\,\alpha\in(0,1)$ and $\nu\in \mathcal{P}(\mathcal{X})$ such that
	$$P_{t_0}\mu\ge\alpha\nu\quad\forall \mu\in\mathcal{P}(\mathcal{X}).$$
\end{defi}

Under this functional setting, we are now ready to state Doeblin's theorem as follows.

\begin{thm}[Doeblin's Theorem] 
	Let $P_t\colon \mathcal{M}(\mathcal{X})\to \mathcal{M}(\mathcal{X})$ be a Markov semi-group that satisfies Doeblin's condition. Then the semigroup has a unique equilibrium $\mu^*\in\mathcal{P}(\mathcal{X})$. Moreover, for all $\mu\in \mathcal{M}(\mathcal{X})$ we have
	$$\|P_t\mu-\langle \mu\rangle \mu^*\|_{M^1}\le\frac{1}{1-\alpha}e^{-\lambda t}\|\mu-\langle \mu\rangle \mu^*\|_{M^1}\quad\forall t\ge 0,$$
	with $\langle \mu\rangle=\int_X \mu$ and $\lambda=-\frac{\ln(1-\alpha)}{t_0}>0$.
\end{thm}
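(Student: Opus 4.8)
The plan is to reduce everything to a single contraction estimate at the fixed time $t_0$ on the closed subspace of zero-mass measures, and then to bootstrap this into the full statement. Concretely, I would first prove the key lemma: for every $\mu\in\mathcal{M}(\mathcal{X})$ with $\langle\mu\rangle=0$ one has $\|P_{t_0}\mu\|_{M^1}\le(1-\alpha)\|\mu\|_{M^1}$. Granting this, the estimate for general $t$ and general $\mu$ follows by routine manipulations, so the real content lies in this single step.

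To establish the lemma, I would use the Hahn--Jordan decomposition $\mu=\mu_+-\mu_-$. Since $\langle\mu\rangle=0$, the positive and negative parts carry equal total mass $m\coloneqq\langle\mu_+\rangle=\langle\mu_-\rangle$, and $\|\mu\|_{M^1}=2m$. If $m=0$ then $\mu=0$ and there is nothing to prove, so assume $m>0$ and note that $\mu_+/m$ and $\mu_-/m$ are probability measures. Applying Doeblin's condition to each gives $P_{t_0}\mu_+\ge\alpha m\,\nu$ and $P_{t_0}\mu_-\ge\alpha m\,\nu$, so the common minorant $\alpha m\,\nu$ can be subtracted from both, yielding the decomposition
$$P_{t_0}\mu=\bigl(P_{t_0}\mu_+-\alpha m\,\nu\bigr)-\bigl(P_{t_0}\mu_--\alpha m\,\nu\bigr)$$
into two non-negative measures. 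Since the $M^1$-norm of a non-negative measure is its total mass, and $P_{t_0}$ preserves mass with $\langle\nu\rangle=1$, each bracket has norm $m-\alpha m=(1-\alpha)m$; the triangle inequality then gives $\|P_{t_0}\mu\|_{M^1}\le 2(1-\alpha)m=(1-\alpha)\|\mu\|_{M^1}$, as claimed. I expect this subtraction of the common Doeblin minorant to be the one genuinely clever step; everything afterwards is bookkeeping.

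Next I would iterate. Writing $t=kt_0+r$ with $k\in\N$ and $0\le r<t_0$, and using that any Markov semi-group is an $M^1$-contraction (from $P_r\mu=P_r\mu_+-P_r\mu_-$ one gets $\|P_r\mu\|_{M^1}\le\langle\mu_+\rangle+\langle\mu_-\rangle=\|\mu\|_{M^1}$), the semi-group property yields $\|P_t\mu\|_{M^1}\le(1-\alpha)^k\|\mu\|_{M^1}$ for zero-mass $\mu$. Since $k\ge t/t_0-1$, one has $(1-\alpha)^k\le\frac{1}{1-\alpha}e^{-\lambda t}$ with $\lambda=-\ln(1-\alpha)/t_0$, giving the exponential decay on zero-mass measures. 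For the equilibrium, I would observe that for $\mu_1,\mu_2\in\mathcal{P}(\mathcal{X})$ the difference $\mu_1-\mu_2$ is zero-mass, so $P_{t_0}$ is a strict contraction of ratio $1-\alpha$ on $\mathcal{P}(\mathcal{X})$; since $\mathcal{P}(\mathcal{X})$ is a closed subset of the Banach space $(\mathcal{M}(\mathcal{X}),\|\cdot\|_{M^1})$, Banach's fixed point theorem provides a unique $\mu^*\in\mathcal{P}(\mathcal{X})$ with $P_{t_0}\mu^*=\mu^*$. Invariance under the whole semi-group follows because $P_{t_0}(P_s\mu^*)=P_s(P_{t_0}\mu^*)=P_s\mu^*$ forces $P_s\mu^*=\mu^*$ by uniqueness, and uniqueness of the equilibrium among all probability measures is immediate from the contraction.

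Finally, for an arbitrary $\mu\in\mathcal{M}(\mathcal{X})$ I would apply the zero-mass estimate to $\mu-\langle\mu\rangle\mu^*$, which has mass $\langle\mu\rangle-\langle\mu\rangle\langle\mu^*\rangle=0$. Because $\mu^*$ is an equilibrium, $P_t(\langle\mu\rangle\mu^*)=\langle\mu\rangle\mu^*$, so $P_t\mu-\langle\mu\rangle\mu^*=P_t(\mu-\langle\mu\rangle\mu^*)$, and the zero-mass decay estimate gives exactly $\|P_t\mu-\langle\mu\rangle\mu^*\|_{M^1}\le\frac{1}{1-\alpha}e^{-\lambda t}\|\mu-\langle\mu\rangle\mu^*\|_{M^1}$. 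The only points requiring mild care are the completeness of $\mathcal{P}(\mathcal{X})$ in total variation and the positivity and mass preservation of $P_t$ used repeatedly, both of which come directly from the Markov hypothesis.
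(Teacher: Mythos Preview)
Your proof is correct and follows the standard argument: subtract the common Doeblin minorant from the positive and negative parts of a zero-mass measure to obtain the contraction at time $t_0$, iterate, and recover the equilibrium via Banach's fixed point theorem. The paper itself does not prove this theorem but simply refers the reader to \cite{gabriel2018measure}; the argument you give is precisely the one found in that reference and in the related literature, so there is nothing to compare.
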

For a proof of Doeblin's Theorem, see for example \cite{gabriel2018measure}.

From Lemma \ref{eqlaux2t}, the solution of the linear problem \eqref{eql2t} determines a Markov semi-group acting on $L^1(\mathcal{D})$. By means of Doeblin's theory, the solutions of linear Equation \eqref{eql2t} converge exponentially to a unique steady state, as we assert in the following theorem. 

\begin{thm}
\label{conveq2t}
    Let $n_0\in L^1(\mathcal{D})$ be a probability density and assume that $p$ smooth satisfies Assumption~\eqref{boundp2t}. Then for a fixed $X>0$, there exists a unique stationary solution $n_X(s,a)\in L^1(\mathcal{D})$ of the linear Equation \eqref{eql2t} satisfying $\iint n_X(s,a)\,da\,ds=1$. Moreover, the corresponding solution of Equation~\eqref{eql2t} satisfies
	$$\|n(t)-n_X\|_{L^1_{s,a}}\le\frac{1}{1-\alpha}e^{-\lambda t}\|n_0-n_X\|_{L^1_{s,a}}\qquad\forall t\ge 0,$$
	with $\alpha=\frac{1}{2}p_0^2\sigma^2 e^{-3p_{\infty}\sigma}$ and $\lambda=-\frac{\log(1-\alpha)}{3\sigma}>0$.
\end{thm}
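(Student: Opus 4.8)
The plan is to reduce everything to Doeblin's Theorem, whose hypotheses are a Markov structure plus a uniform minorization. The Markov property of the solution semigroup $P_t$ of \eqref{eql2t} is already furnished by Lemma \ref{linearaux2t} (positivity and conservation of mass), so the whole proof rests on verifying Doeblin's condition with $t_0=3\sigma$: I must exhibit $\alpha\in(0,1)$ and a probability density $\nu$ with $P_{3\sigma}n_0\ge\alpha\nu$ for every probability density $n_0$. The stated constant $\alpha=\tfrac12 p_0^2\sigma^2 e^{-3p_\infty\sigma}$ already signals the mechanism: two firing events (two factors $p_0$), three transport segments of total length $\le 3\sigma$ (the factor $e^{-3p_\infty\sigma}$ arising from survival exponentials bounded below by $p\le p_\infty$), and a target set of area $\tfrac12\sigma^2$.

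First I would fix the target region $R\coloneqq\{(s,a)\colon 0\le s\le\sigma,\ s+\sigma\le a\le 2\sigma\}$, whose measure is $|R|=\tfrac12\sigma^2$, and set $\nu\coloneqq\tfrac{2}{\sigma^2}\mathds{1}_R$. For $(s,a)\in R$ I would lower-bound $n(3\sigma,s,a)$ by unwinding the characteristic representation \eqref{char2times} twice. Since $s\le\sigma<3\sigma$ the initial-data term drops and $n(3\sigma,s,a)=N(3\sigma-s,a-s)\,e^{-\int_0^s p}\ge e^{-p_\infty s}N(3\sigma-s,a-s)$. Because $a-s\ge\sigma$ on $R$, the lower bound \eqref{boundp2t} gives $N(\tau,b)\ge p_0\int_b^\infty n(\tau,b,u)\,du$ with $\tau=3\sigma-s$, $b=a-s$ (first factor $p_0$). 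Applying \eqref{char2times} once more to each $n(\tau,b,u)$ (legitimate since $b=a-s<3\sigma-s=\tau$) and integrating in $u$ yields $\int_b^\infty n(\tau,b,u)\,du\ge e^{-p_\infty b}\,\widetilde X(\tau-b)$, where $\widetilde X(t')\coloneqq\int_0^\infty N(t',w)\,dw$ is the total activity and $\tau-b=3\sigma-a$.

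The key remaining ingredient, and the step I expect to be the main obstacle, is a lower bound on $\widetilde X$ uniform in the initial datum. Writing $\widetilde X(t')=\iint p\,n\ge p_0\,\mu_{t'}(\{s\ge\sigma\})$ (second factor $p_0$), where $\mu_{t'}$ denotes the probability distribution $n(t',\cdot,\cdot)$, I must show $\mu_{t'}(\{s\ge\sigma\})\ge e^{-p_\infty\sigma}$ for every $t'\ge\sigma$, independently of $n_0$. The difficulty is that firing continually resets mass to $s=0$, so one cannot simply argue that first elapsed times grow. The resolution is a survival estimate: the fraction of mass that does not fire over a window of length $\sigma$ is at least $e^{-\int p}\ge e^{-p_\infty\sigma}$, and every such neuron raises its first elapsed time by exactly $\sigma$, hence lands in $\{s\ge\sigma\}$. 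Quantitatively, the pure-transport-with-survival part of the representation \eqref{soln2t} gives $n(t',s,a)\ge e^{-p_\infty\sigma}\,n(t'-\sigma,s-\sigma,a-\sigma)$ on $\{s\ge\sigma\}$, and integrating over this set, using mass conservation at time $t'-\sigma$, yields the claimed bound for all $t'\ge\sigma$.

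Assembling the three inequalities on $R$ gives $n(3\sigma,s,a)\ge p_0^2\,e^{-p_\infty(a+\sigma)}\ge p_0^2 e^{-3p_\infty\sigma}$ since $a\le 2\sigma$, so that $P_{3\sigma}n_0\ge p_0^2 e^{-3p_\infty\sigma}\mathds{1}_R=\bigl(\tfrac12 p_0^2\sigma^2 e^{-3p_\infty\sigma}\bigr)\nu=\alpha\nu$, which is exactly Doeblin's condition with $t_0=3\sigma$. Doeblin's Theorem then delivers at once the unique invariant probability density $n_X$ (hence uniqueness of the steady state and $\iint n_X=1$) together with the exponential estimate with prefactor $\tfrac{1}{1-\alpha}$ and rate $\lambda=-\log(1-\alpha)/(3\sigma)$, applied to $\mu=n_0$ with $\langle\mu\rangle=1$. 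The only points requiring care beyond the obstacle above are checking the support constraints in \eqref{char2times} along the chain (all met because $a\le 2\sigma<3\sigma$ on $R$) and noting that the activity bound never uses the value of the parameter $X$, only the two-sided estimate \eqref{boundp2t}.
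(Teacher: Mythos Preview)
Your proposal is correct and follows essentially the same route as the paper: verify Doeblin's condition at $t_0=3\sigma$ by unwinding the characteristic formula twice (two factors of $p_0$, survival exponentials bounded by $e^{-p_\infty\cdot}$), landing on the triangle $\{2\sigma>a>s+\sigma\}$ of area $\tfrac12\sigma^2$, and then invoke Doeblin's Theorem. The only cosmetic difference is your ``sliding window'' survival estimate $\mu_{t'}(\{s\ge\sigma\})\ge e^{-p_\infty\sigma}$ for all $t'\ge\sigma$, obtained by evolving $n(t'-\sigma,\cdot,\cdot)$ for time $\sigma$; the paper instead tracks the mass that has never fired since $t=0$, giving $\iint_{\{s>t\}}n(t,s,a)\,ds\,da\ge e^{-p_\infty t}$ and hence $\int_t^\infty N(t,a)\,da\ge p_0 e^{-p_\infty t}$, and feeds this directly into the chain so that the three exponentials collapse to the single factor $e^{-p_\infty t}|_{t=3\sigma}$. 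Both arguments are the same idea (survival along characteristics for one refractory period) and yield the same final constant; your reference should be the semigroup property together with \eqref{char2times} rather than \eqref{soln2t}.
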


In order to obtain the result, we show that after some time the solution of the linear problem is uniformly bounded from below for all probability densities. Thus from Doeblin's theorem we get the exponential convergence to equilibrium.

\begin{lem}
\label{doeblinlb}
Assume \eqref{mass2t} and \eqref{boundp2t}. Let $n(t,s,a)$ be a solution of \eqref{eql2t}, then there exist $t_0>0,\,\alpha\in(0,1)$ and a probability density $\nu\in L^1$ such that
$$n(t_0,s,a)\ge \alpha \nu(s,a).$$
\end{lem}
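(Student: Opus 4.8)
The plan is to verify the Doeblin minorization by producing a pointwise lower bound for $n(3\sigma,s,a)$ on a fixed region $R\subset\mathcal{D}$ that does not depend on the initial datum, and then to take $t_0=3\sigma$ together with $\nu$ equal to the normalized indicator of $R$. The whole argument rests on the mild formulation \eqref{char2times}: since $n,N\ge 0$ by Lemma \ref{linearaux2t}, I may freely discard the ``never discharged'' contribution and keep only the nonnegative boundary term, which bounds $n$ below by the discharge flux transported along characteristics. I will apply this idea twice, so that two discharges are accounted for (hence the factor $p_0^2$ in $\alpha$), and the neurons that have not yet discharged are traced back to the conserved mass $\iint n_0=1$.

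First I would unwind \eqref{char2times} once to obtain, for $a>s$ and $3\sigma>s$, the lower bound $n(3\sigma,s,a)\ge e^{-p_\infty s}\,N(3\sigma-s,a-s)$. Writing $\tau_1=3\sigma-s$ and $b_1=a-s$, the boundary definition of $N$ together with $p\ge p_0$ on $\{a>s\ge\sigma\}$ from \eqref{boundp2t} gives $N(\tau_1,b_1)\ge p_0\int_{b_1}^\infty n(\tau_1,b_1,u)\,du$ as soon as $b_1\ge\sigma$. Applying the boundary term of \eqref{char2times} a second time to each $n(\tau_1,b_1,u)$ and integrating in $u$ turns this integral into the total discharge flux $\mathcal{N}(\tau):=\int_0^\infty N(\tau,a)\,da=\iint p\,n(\tau,\cdot)$, evaluated at $\tau_2:=\tau_1-b_1=3\sigma-a$. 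Collecting the two survival factors, which combine into $e^{-p_\infty s}e^{-p_\infty b_1}=e^{-p_\infty a}$, I arrive at
\[
n(3\sigma,s,a)\ \ge\ p_0\,e^{-p_\infty a}\,\mathcal{N}(3\sigma-a),
\]
valid wherever $a-s\ge\sigma$ and $a<3\sigma$.

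The heart of the matter, and the step I expect to be the main obstacle, is a lower bound on $\mathcal{N}(\tau_2)$ that is uniform over all probability densities $n_0$; a crude estimate of the mass sitting in $\{s\ge\sigma\}$ is not enough, since a priori most mass could keep recycling through small $s$. The resolution I would use is to write $\mathcal{N}(\tau_2)\ge p_0\,G(\tau_2)$, where $G(\tau)=\iint_{a>s\ge\sigma}n(\tau,\cdot)$, and then to bound $G$ from below by the mass that has \emph{never} discharged up to time $\tau$. Two facts combine: by \eqref{char2times} the survival factor of this term is at least $e^{-p_\infty\tau}$, so by \eqref{mass2t} the never-discharged mass is at least $e^{-p_\infty\tau}\iint n_0=e^{-p_\infty\tau}$; and for $\tau\ge\sigma$ every never-discharged neuron has $s=s_0+\tau\ge\sigma$, so this entire mass lies in $\{s\ge\sigma\}$. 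Hence $G(\tau_2)\ge e^{-p_\infty\tau_2}$ whenever $\tau_2\ge\sigma$, i.e. $a\le 2\sigma$, giving $\mathcal{N}(3\sigma-a)\ge p_0\,e^{-p_\infty(3\sigma-a)}$.

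Finally I would combine the estimates on the region $R=\{(s,a):\ \sigma\le a\le 2\sigma,\ a-s\ge\sigma\}$, on which all the indicator constraints above are met. There the exponentials telescope exactly, $e^{-p_\infty a}\,e^{-p_\infty(3\sigma-a)}=e^{-3p_\infty\sigma}$, so that $n(3\sigma,s,a)\ge p_0^{\,2}e^{-3p_\infty\sigma}\,\mathds{1}_R$. A direct computation gives $|R|=\int_\sigma^{2\sigma}(a-\sigma)\,da=\sigma^2/2$, so taking $\nu=\tfrac{2}{\sigma^2}\mathds{1}_R$, which is a probability density, yields $n(3\sigma,\cdot)\ge\alpha\,\nu$ with $\alpha=\tfrac12 p_0^2\sigma^2 e^{-3p_\infty\sigma}$ and $t_0=3\sigma$, matching the constants announced in Theorem \ref{conveq2t}. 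Integrating this inequality against the conserved mass $1$ forces $\alpha\le 1$, and the strict positivity of $p_0,\sigma$ gives $\alpha\in(0,1)$, completing the verification of Doeblin's condition.
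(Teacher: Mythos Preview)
Your proof is correct and follows essentially the same route as the paper's: one backward application of \eqref{char2times} to bound $n(3\sigma,s,a)$ by $N(3\sigma-s,a-s)$, a second application inside the boundary term to reduce to the total flux $\mathcal{N}(3\sigma-a)$, and finally the observation that for $\tau\ge\sigma$ the entire never-discharged mass (at least $e^{-p_\infty\tau}$) sits in $\{s\ge\sigma\}$, yielding $\mathcal{N}(\tau)\ge p_0 e^{-p_\infty\tau}$. The only cosmetic difference is the order of presentation---you work backwards from $t_0=3\sigma$, whereas the paper builds the estimates forward in time---but the region $R=\{2\sigma>a>s+\sigma\}$, the density $\nu=\tfrac{2}{\sigma^2}\mathds{1}_R$, and the constant $\alpha=\tfrac12 p_0^2\sigma^2 e^{-3p_\infty\sigma}$ coincide exactly.
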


\begin{proof}
The main idea of the proof is to control the mass transported along the lines of direction $(1,1)$. Firstly, we observe the transport of the initial data $n_0$. From Assumption \eqref{boundp2t} and the characteristics Formula \eqref{char2times} the following inequality holds
\begin{equation}
\label{lowermass}
    \int_t^\infty\int_s^\infty n(t,s,a)\,da\,ds\ge e^{-p_\infty t},\qquad\forall t\ge\sigma.
\end{equation}

Secondly, we see the mass that returns at $s=0$. From \eqref{lowermass} we get for all $t\ge\sigma$
\begin{equation} 
\int_t^\infty n(t,s=0,a)\,da=\int_t^\infty N(t,a)\,da\ge p_0\int_t^\infty\int_a^\infty n(t,a,u)\,du\,da\ge p_0e^{-p_\infty t}. 
\end{equation}
This means that we reduced by one dimension the problem of finding the uniform lower bound. For $t\ge\sigma$ the mass of the region $\{(s,a)\colon a>s>t\}$ concentrates in the line $\{(0,a)\colon a\ge t\}$, as we see in Figure \ref{transport1}.

\begin{figure}[ht!]
    \centering
    \includegraphics[scale=0.75]{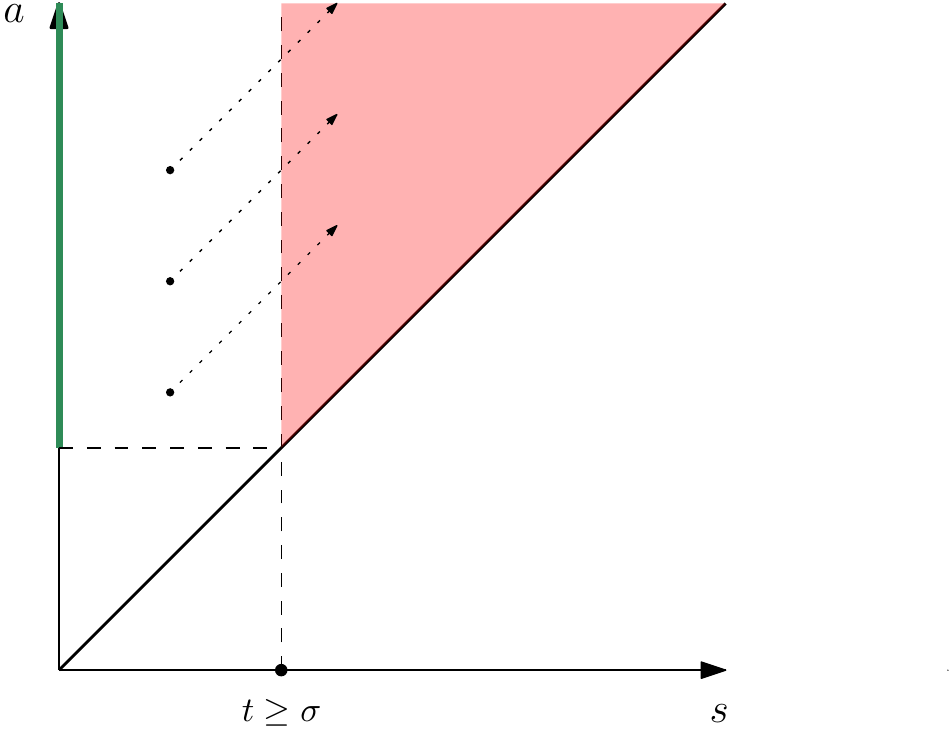}
    \caption{First reduction of dimension. For a $t\ge\sigma$, all points in $\mathcal{D}$ are transported to the red region, which has a total mass of at least $e^{-p_\infty t}$. Then a mass of at least $p_0e^{-p_\infty t}$ returns to the green line.}
    \label{transport1}
\end{figure}

Thirdly, in order to control the point values of $n(t,s,a)$, we regard the values of $N(t,a)$. Observe that from Formula~\eqref{char2times} we have
\begin{equation}
\label{nlowerb}
\begin{split}
   n(t,s,a)&\ge N(t-s,a-s)e^{-\int_0^s p(s',s'+a-s,X)ds'}\mathds{1}_{\{t,a>s\}}\\
   &\ge N(t-s,a-s)e^{-p_\infty s}\mathds{1}_{\{t,a>s\}},
\end{split}
\end{equation}
thus for $a\ge\sigma$ and $t-a>\sigma$, we obtain by using again Assumption \eqref{boundp2t} that
\begin{equation}
    \begin{split}
        N(t,a)&\ge p_0\int_a^\infty n(t,a,u)\,du\\
        &\ge p_0 e^{-p_\infty a}\int_a^\infty N(t-a,u-a)\,du=p_0 e^{-p_\infty a}\int_0^\infty N(t-a,u)\,du\\
        &\ge p_0 e^{-p_\infty a}\int_{t-a}^\infty N(t-a,u)\,du\ge  p_0^2 e^{-p_\infty t}.
    \end{split}
\end{equation}
This means we reduced the problem of finding the uniform lower bound by one dimension again, as we see in Figure~\ref{transport2}.

\begin{figure}[ht!]
    \centering
    \includegraphics[scale=0.75]{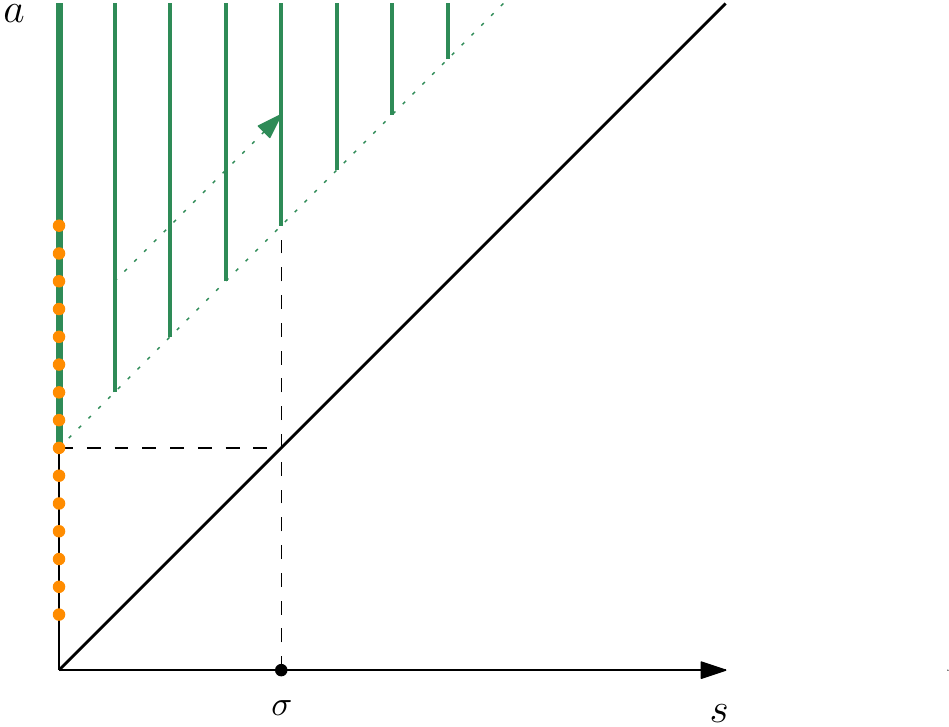}
    \caption{Second reduction of dimension. For $t\in[\sigma,2\sigma]$ the green lines are transported to the region where $s\ge\sigma$ and their mass is of at least $p_0e^{-2 p_\infty\sigma}$. Then the mass of each green line is concentrated in the orange points, whose values are at least $p_0^2e^{-2p_\infty\sigma}$.}
    \label{transport2}
\end{figure}

Finally, once we have estimated $N(t,a)$ from below, we come back to estimate \eqref{nlowerb} to conclude that for $a-s\ge\sigma$ and $t-a>\sigma$ we have
\begin{equation}
    \begin{split}
        n(t,a,s)&\ge N(t-s,a-s)e^{-p_\infty s}\mathds{1}_{\{ t,a>s\}}\\
        &\ge p_0^2 e^{-p_\infty t}\mathds{1}_{\{t-a,a-s>\sigma\}},
    \end{split}
\end{equation}
so that we can choose $t=3\sigma$ and conclude that 
$$n(3\sigma,a,s)\ge p_0^2 e^{-3p_\infty\sigma}\mathds{1}_{\{2\sigma>a>s+\sigma\}}.$$
Therefore we get the desired result with $t_0=3\sigma,\,\alpha=\frac{1}{2}\sigma^2p_0^2e^{-3p_\infty\sigma}\in(0,1)$ and $\nu$ given by
$$\nu(s,a)=\frac{2}{\sigma^2}\mathds{1}_{\{2\sigma>a>s+\sigma\}},$$
whose support is contained in orange region of Figure \ref{transport3}.

\begin{figure}[ht!]
    \centering
    \includegraphics[scale=0.75]{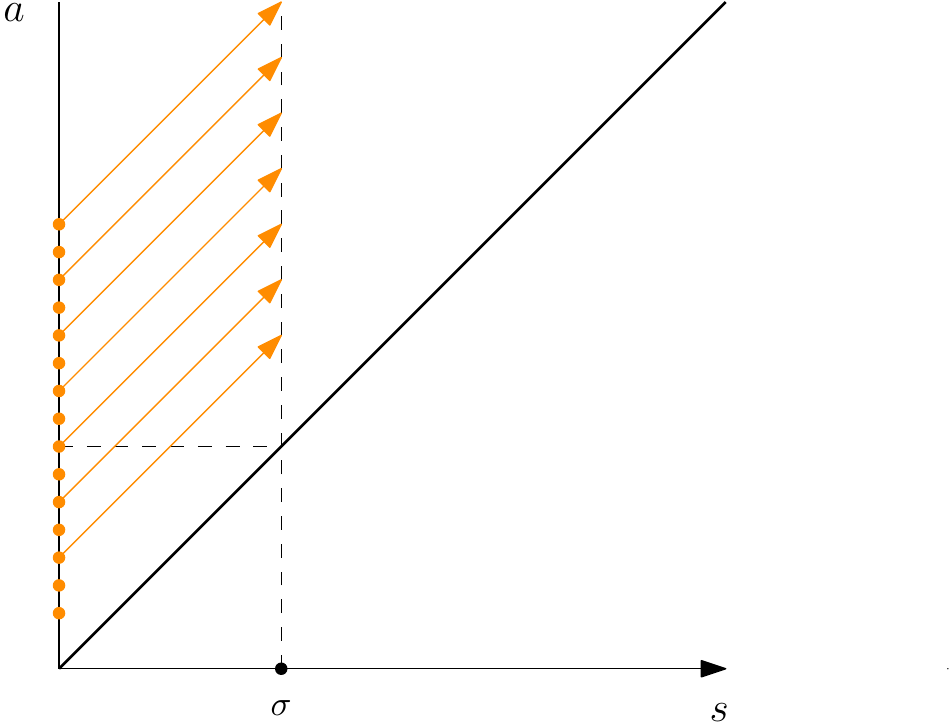}
    \caption{Finally for $t\in[2\sigma,3\sigma]$ the orange dots are transported to region where $s\ge\sigma$, which allows to construct a minorization function for Doeblin's Theorem.}
    \label{transport3}
\end{figure}
\end{proof}

From Lemma \ref{doeblinlb} the hypothesis of Doeblin's theorem are verified and Theorem \ref{conveq2t} readily follows.

Concerning the conditioned activity $N$ in System \eqref{eql2te}, we conclude from Theorem \ref{conveq2t} that for $X$ fixed, there is a unique stationary $N_X\in L^1(0,\infty)$ determined by the method of characteristics through the formula
\begin{equation}
\label{solneq2t}
    n_X(s,a)=N_X(a-s)\exp\left(-\int_0^s p(s',a-s+s',X)\,ds'\right),\qquad a>s.
\end{equation}
Replacing this expression in the boundary condition at $s=0$, we obtain the following integral equation for $N_X(a)$
\begin{equation}
    \label{Idcompact}
    N_X(a)=\mathcal{T}_X[N_X](a),
\end{equation}
with $\mathcal{T}_X\colon L^1(0,\infty)\to L^1(0,\infty)$ given by
\begin{equation*}
\begin{split}
\mathcal{T}_X[N](a)&\coloneqq \int_0^\infty p(a,u+a,X)\exp\left(-\int_0^a p(s',u+s',X)\,ds\right)N(u)\,du\\
    &=-\frac{\partial}{\partial a}\int_0^\infty \exp\left(-\int_0^a p(s',u+s',X)\,ds'\right)N(u)\,du.
\end{split}
\end{equation*}

Moreover, by integrating Equation \eqref{solneq2t} we get
\begin{equation}
\label{normal2t}
    \int_0^\infty\int_0^\infty N_X(a)\exp\left(-\int_0^s p(s',a+s',X)\,ds'\right)\,da\,ds=1.
\end{equation}
Therefore we conclude that finding a function $N\in L^1(0,\infty)$ satisfying Equation \eqref{Idcompact} and Condition \eqref{normal2t} is equivalent to finding a steady state $n_X(s,a)$ in Equation \eqref{eql2te}. The integral Equation \eqref{Idcompact} will play an important role in the analysis of the non-linear System \eqref{eqmain2t}, thus we prove the following two lemmas on the operator $\mathcal{T}_X$ that will be useful in the sequel.

\begin{lem}
\label{kerran}
Assume that $p$ Lipschitz satisfies Assumption \eqref{boundp2t}. For each $X>0$ the operator $\mathcal{T}_X$ is compact and it satisfies that $\dim\ker(I-\mathcal{T}_X)=1$, which is generated by a non-negative function, and $$\mathop{\textrm{ran}}(I-\mathcal{T}_X)=\left\{f\in L^1(0,\infty)\colon\int_0^\infty f(x)\,dx=0\right\}.$$
\end{lem}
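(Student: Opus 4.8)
The plan is to establish the three assertions in order: compactness of $\mathcal{T}_X$ first, then the structure of $\ker(I-\mathcal{T}_X)$, and finally the characterization of the range; the last two will follow softly from the Fredholm alternative together with a single mass-conservation identity, so the real work is the compactness.

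\textbf{Compactness.} Write $\mathcal{T}_X[N](a)=\int_0^\infty k_X(a,u)\,N(u)\,du$ with nonnegative kernel
$$k_X(a,u)=p(a,u+a,X)\exp\Big(-\int_0^a p(s',u+s',X)\,ds'\Big).$$
I would verify the Fréchet--Kolmogorov--Riesz criterion for relative compactness in $L^1(0,\infty)$ of the image of the unit ball. Assumption \eqref{boundp2t} gives $p(s',u+s',X)\ge p_0$ whenever $s'\ge\sigma$ and $u>0$, hence $\int_0^a p(s',u+s',X)\,ds'\ge p_0(a-\sigma)_+$, which yields both the pointwise bound $k_X(a,u)\le p_\infty e^{-p_0(a-\sigma)_+}$ and the tail estimate $\int_R^\infty k_X(a,u)\,da= e^{-\int_0^R p\,ds'}\le e^{-p_0(R-\sigma)_+}$, uniformly in $u$; integrating against $|N|$ gives the $L^1$-bound and uniform tightness at $a=+\infty$. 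For translation equicontinuity, since $p\in W^{1,\infty}$ the map $a\mapsto k_X(a,u)$ is Lipschitz with $|\partial_a k_X(a,u)|\le C\,e^{-p_0(a-\sigma)_+}$ uniformly in $u$, so $\int_0^\infty|k_X(a+h,u)-k_X(a,u)|\,da\le C|h|$ uniformly in $u$, and this transfers to $\mathcal{T}_X[N]$. These three facts give relative compactness of $\mathcal{T}_X(\{\|N\|_{L^1}\le1\})$, hence compactness.

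\textbf{The kernel of $I-\mathcal{T}_X$.} The decisive identity is mass conservation: $\int_0^\infty \mathcal{T}_X[N]\,da=\int_0^\infty N\,da$. It follows from the column sum $\int_0^\infty k_X(a,u)\,da=-\big[e^{-\int_0^a p(s',u+s',X)\,ds'}\big]_0^\infty=1$ for a.e.\ $u>0$ (using $\int_0^\infty p(s',u+s',X)\,ds'=+\infty$ by \eqref{boundp2t}) together with Fubini. For a nonnegative generator I would invoke Theorem \ref{conveq2t}: its unique stationary probability density $n_X\ge0$ has trace $N_X(a)=n_X(0,a)\ge0$, which by the boundary condition solves \eqref{Idcompact}, so $N_X\in\ker(I-\mathcal{T}_X)$ and $N_X\ge0$. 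For one-dimensionality I would use the linear correspondence \eqref{solneq2t}: any $N\in\ker(I-\mathcal{T}_X)$ produces via \eqref{solneq2t} a stationary solution lying in $L^1(\mathcal{D})$ (integrability from $\int_0^\infty e^{-\int_0^s p\,ds'}\,ds\le\sigma+1/p_0$), and conversely every stationary solution has trace in $\ker(I-\mathcal{T}_X)$. By Theorem \ref{conveq2t} and Doeblin's estimate, every stationary solution equals $\langle n\rangle\,n_X$, so the stationary subspace is one-dimensional; transporting this through the correspondence gives $\ker(I-\mathcal{T}_X)=\mathrm{span}\{N_X\}$.

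\textbf{The range of $I-\mathcal{T}_X$.} Since $\mathcal{T}_X$ is compact, the Fredholm alternative makes $I-\mathcal{T}_X$ Fredholm of index zero, so $\mathop{\textrm{ran}}(I-\mathcal{T}_X)$ is closed with codimension $\dim\ker(I-\mathcal{T}_X)=1$. Mass conservation gives $\int_0^\infty (I-\mathcal{T}_X)N\,da=0$ for all $N$, so $\mathop{\textrm{ran}}(I-\mathcal{T}_X)\subseteq\{f\in L^1(0,\infty):\int_0^\infty f(x)\,dx=0\}$. The right-hand side is the kernel of the bounded functional $f\mapsto\int_0^\infty f$, hence closed of codimension one, and a codimension-one subspace contained in another of the same codimension must coincide with it; this yields equality. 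I expect the compactness step to be the main obstacle, the crux being to secure the decay and Lipschitz estimates on $k_X$ \emph{uniformly in the integration variable $u$}, so that the Fréchet--Kolmogorov conditions hold for the entire image of the unit ball.
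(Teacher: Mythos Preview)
Your proof is correct and follows essentially the same route as the paper: compactness via the Fréchet--Kolmogorov criterion (boundedness, uniform tails, equicontinuity from the Lipschitz estimate on the kernel), one-dimensionality of the kernel via the correspondence \eqref{solneq2t} with Theorem~\ref{conveq2t}, and the range via the Fredholm alternative. The only cosmetic difference is that the paper identifies the range by computing the adjoint $\mathcal{T}_X^*$ and checking that constants lie in $\ker(I-\mathcal{T}_X^*)$, whereas you use the equivalent mass-conservation identity $\int_0^\infty k_X(a,u)\,da=1$ together with a codimension count; these are dual phrasings of the same fact.
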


\begin{proof}
The first step is to prove that $\mathcal{T}_X$ is a compact operator. This means we have to prove that the set $A=\{\mathcal{T}_X[f]\colon ||f||_1\le1\}$ is relatively compact in $L^1$.

First observe that $\|\mathcal{T}_X[f]\|_1\le p_\infty$ for all $f$ with $||f||_1\le1$, so $A$ is bounded.

Second, we prove that 
$$\int_r^\infty|\mathcal{T}_X[f](a)|da\to 0\qquad\textrm{uniformly when}\:r\to\infty.$$
Indeed for $r>\sigma$ we have
\begin{equation*}
\begin{split}
    \int_r^\infty|\mathcal{T}_X[f](a)|da&\le p_\infty\int_r^\infty\int_0^\infty|f(u)|e^{-\int_0^a p(s',u+s',X)\,ds'}\,du\,da\\
    &\le p_\infty\int_r^\infty\int_\sigma^\infty|f(u)|e^{-p_0(a-\sigma)}\,du\,da\\
    &\le p_\infty e^{p_0\sigma}\int_\sigma^\infty|f(u)|du\int_r^\infty e^{-p_0a}da\\
    &\le  p_\infty e^{p_0\sigma}\frac{e^{-p_0r}}{p_0}\to 0.
\end{split}
\end{equation*}

Now we prove the equicontinuity property. Observe that
\begin{equation*}
\begin{split}
    \frac{d}{da}\mathcal{T}_X[f](a)&=\int_0^\infty (\partial_s p+\partial_a p)(a,u+a,X)e^{-\int_0^a p(s',u+s',X)\,ds'}f(u)du\\
    &\quad-\int_0^\infty p(a,u+a,X)^2e^{-\int_0^a p(s',u+s',X)\,ds'}f(u)du,
\end{split}
\end{equation*}
thus for $f$ with $\|f\|_1\le 1$ we have
\begin{equation*}
    \int_0^\infty\left|\frac{d}{da}\mathcal{T}_X[f](a)\right|da\le \|\nabla p\|_\infty+p_\infty^2.
\end{equation*}
Therefore by the Kolmogorov-Frechet theorem we conclude that $A$ is relatively compact so the operator $\mathcal{T}_X$ is.

Furthermore, since $n_X$ is the unique steady state of Equation \eqref{eql2t} that is a probability density, from the linearity we deduce that any other function in $\ker(I-\mathcal{T}_X)$ is a multiple of $N_X$ and thus $\dim\ker(I-\mathcal{T}_X)=1$.

Next, we proceed to determine $\mathop{\textrm{ran}}(I-\mathcal{T}_X)$. Observe that adjoint operator $\mathcal{T}_X^*\colon L^\infty\to L^\infty$ is given by
$$\mathcal{T}_X^*[g](a)=\int_0^\infty p(u,u+a,X)\exp\left(-\int_0^u p(s',a+s',X)\,ds'\right)g(u)\,du,$$
and from Fredholm's alternative we get $\dim\ker(I-\mathcal{T}_X^*)=\dim\ker(I-\mathcal{T}_X)=1$. Since $\mathcal{T}_X^*[g]\equiv0$ for any constant function, we deduce that $\ker(I-\mathcal{T}_X^*)$ is the subspace of constant functions. Finally from orthogonality conditions we conclude that 
$$\mathop{\textrm{ran}}(I-\mathcal{T}_X)=\left\{f\in L^1(0,\infty)\colon\int_0^\infty f(x)\,dx=0\right\}.$$

\end{proof}

A direct consequence of Lemma \ref{kerran} is the following result
\begin{lem}
    Assume that $p$ is smooth respect to variable $X$, then $N_X(a)$ is also smooth with respect to $X$.
\end{lem}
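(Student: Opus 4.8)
The plan is to apply the implicit function theorem in the Banach space $L^1(0,\infty)$, exploiting the Fredholm structure of $I-\mathcal{T}_X$ established in Lemma \ref{kerran}. The essential obstacle is that $I-\mathcal{T}_X$ is \emph{not} invertible — it has a one-dimensional kernel — so a naive application of the implicit function theorem to the eigenvalue equation $(I-\mathcal{T}_X)N=0$ fails. I will circumvent this by augmenting \eqref{Idcompact} with the normalization \eqref{normal2t} and working in a target space adapted to the range of $I-\mathcal{T}_X$, in the spirit of a Lyapunov--Schmidt reduction.

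First I would set $Y\coloneqq\{f\in L^1(0,\infty)\colon\int_0^\infty f=0\}$, which by Lemma \ref{kerran} equals $\mathop{\textrm{ran}}(I-\mathcal{T}_X)$ for every $X$ and, crucially, does not depend on $X$. Introducing the normalization functional
$$\Phi_X(N)\coloneqq\int_0^\infty\int_0^\infty N(a)\exp\left(-\int_0^s p(s',a+s',X)\,ds'\right)\,da\,ds,$$
I define $F\colon L^1(0,\infty)\times\R\to Y\times\R$ by $F(N,X)=\big((I-\mathcal{T}_X)N,\,\Phi_X(N)-1\big)$. This is well posed: since $\mathcal{T}_X$ preserves total mass (as is visible from the second, divergence-form representation of $\mathcal{T}_X$ together with the decay forced by \eqref{boundp2t}), we have $(I-\mathcal{T}_X)N\in Y$ automatically. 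By construction $F(N_X,X)=(0,0)$, because $N_X$ solves both \eqref{Idcompact} and \eqref{normal2t}.

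Next I would check that $F$ is infinitely differentiable jointly in $(N,X)$: linearity in $N$ is immediate, while smoothness in $X$ follows by differentiating under the integral sign, using that $p$ is smooth in $X$ together with the uniform bounds \eqref{boundp2t}, so that $X\mapsto\mathcal{T}_X$ is a smooth map into $\mathcal{L}(L^1(0,\infty))$ and $X\mapsto\Phi_X$ is smooth. The main computation is then to show that the partial differential $D_NF(N_X,X)\colon L^1(0,\infty)\to Y\times\R$, namely $h\mapsto\big((I-\mathcal{T}_X)h,\,\Phi_X(h)\big)$, is an isomorphism. For injectivity: if $(I-\mathcal{T}_X)h=0$ then $h=cN_X$ by Lemma \ref{kerran}, whence $\Phi_X(h)=c\,\Phi_X(N_X)=c=0$. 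For surjectivity: given $(g,r)\in Y\times\R$, the equation $(I-\mathcal{T}_X)h=g$ is solvable precisely because $g\in Y=\mathop{\textrm{ran}}(I-\mathcal{T}_X)$, and the free constant in $h=h_0+cN_X$ is uniquely fixed by imposing $\Phi_X(h)=r$, using once more that $\Phi_X(N_X)=1\neq0$.

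Having shown $D_NF(N_X,X)$ is a bounded bijection, hence an isomorphism by the open mapping theorem, the implicit function theorem produces a smooth local branch $X\mapsto\widetilde N_X$ with $F(\widetilde N_X,X)=0$. Since Lemma \ref{kerran} together with the normalization \eqref{normal2t} guarantees that the normalized solution of \eqref{Idcompact} is unique, this branch must coincide with $N_X$, and therefore $N_X$ is smooth in $X$. The one genuine difficulty is the non-invertibility of $I-\mathcal{T}_X$; once it is resolved by augmenting the problem in the fixed codomain $Y\times\R$, the remainder is a routine verification of regularity and of the bijectivity of the linearization.
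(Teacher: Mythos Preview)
Your proposal is correct and follows essentially the same route as the paper: define $F(N,X)=\big((I-\mathcal{T}_X)N,\ \Phi_X(N)-1\big)$ with target $\mathop{\textrm{ran}}(I-\mathcal{T}_X)\times\R$, use Lemma~\ref{kerran} to see that $D_NF$ is an isomorphism, and apply the implicit function theorem. Your write-up is in fact more careful than the paper's, since you explicitly note that the range equals the fixed, $X$-independent subspace $Y=\{f:\int f=0\}$ and you verify injectivity and surjectivity of $D_NF$ directly.
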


\begin{proof}
Define the $F\colon L^1(0,\infty)\times (0,\infty)\to\mathop{\textrm{ran}} (I-\mathcal{T}_X)\times\R$ given by
$$F(N,X)=\begin{pmatrix}(I-\mathcal{T}_X)[N]\:,&\iint N(a) e^{-\int_0^a p(s',u+s',X)\,ds'}\,da\,ds-1\end{pmatrix},$$
so that for each $X$ we have $F(N_X(a),X)=0$. Observe that $D_NF$ is given by
$$D_NF[h]=\begin{pmatrix}(I-\mathcal{T}_X)[h]\:,&\iint h(a) e^{-\int_0^a p(s',u+s',X)\,ds'}\,da\,ds\end{pmatrix}$$
Thus by Lemma \ref{kerran} this operator is an isomorphism and from the implicit function theorem we conclude that $N_X(a)$ depends smoothly on $X$.
\end{proof}

\begin{rmk}
The lower bound condition \eqref{boundp2t} on the firing rate $p$ is important to verify the existence of a steady state for System \eqref{eqmain2t} and Doeblin's condition. For example, when we consider $X>0$ and 
$$p(s,a,X)=\mathds{1}_{\{a-s>X\}},$$
then there are no steady states of the linear Equation \eqref{eql2t}, besides the zero solution. Indeed, from Equation \eqref{Idcompact} we deduce that the discharging flux $N$ should satisfy
\begin{equation*}
    N(a)=e^{-a}\int_X^\infty N(u)\,du,
\end{equation*}
whose unique non-negative solution in $L^1(0,\infty)$ is $N\equiv0$.
\end{rmk}

\section{Steady states}
\label{stationary2t}
Consider $n^*=n^*(s,a)$ with support in the set $\{s\le a\}$. We are interested in the stationary solutions of the non-linear System \eqref{eqmain2t} given by
\begin{equation}
\label{eqest2t}
\left\{
\begin{matrix*}[l]
\partial_s n+\partial_a n+p(s,a,X)n=0&a>s>0,\vspace{0.15cm}\\
n(s=0,a)=N(a)\coloneqq\int_0^\infty p(a,u,X)n(a,u)\,du&a>0,\vspace{0.15cm}\\
X=\int_0^\infty N(a)\,da,\vspace{0.15cm}\\
\iint n(s,a)\,da\,ds=1,\quad n(s,a)\ge0.
\end{matrix*}
\right.
\end{equation}

We define $N_X$ as the respective conditional activity in terms of $X$. In order to have a steady state of the non-linear Problem \eqref{eqmain2t}, we must find $X>0$ such that
\begin{equation}
\label{eqphi}
    X=\Phi(X)\coloneqq\int_0^\infty N_X(a)\,da.
\end{equation}
In the general case this equation has always a solution since the right-hand side is uniformly bounded thanks to estimate \eqref{boundX} and $N_X(a)$ depends continuously on $X$. By using the properties of the operator $\mathcal{T}_X$, we prove that under the weak interconnections regime the non-linear System \eqref{eqmain2t} has a unique steady state.

\begin{thm}
\label{uniquesteady}
Assume \eqref{mass2t} and that $p$ smooth satisfies Assumption \eqref{boundp2t}. Then for $\|\partial_X p\|_\infty$ small enough, System \eqref{eqmain2t} has a unique steady state $(n^*,N^*,X^*)$.
\end{thm}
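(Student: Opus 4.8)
The plan is to treat Theorem~\ref{uniquesteady} as a statement about the scalar fixed-point map $\Phi$ defined in \eqref{eqphi}. Existence of a fixed point is already in hand: by \eqref{boundX} the map $\Phi$ sends the compact interval $[0,p_\infty]$ into itself, and since $X\mapsto N_X$ depends continuously (indeed smoothly) on $X$, $\Phi$ is continuous, so the intermediate value theorem applied to $X\mapsto\Phi(X)-X$ yields at least one $X^*\in[0,p_\infty]$ with $\Phi(X^*)=X^*$. It therefore remains to prove uniqueness, and the natural route is to show that under a smallness assumption on $\|\partial_X p\|_\infty$ the map $\Phi$ is a strict contraction; Banach's fixed-point theorem then forces $X^*$ to be unique, and the construction of $(n_X,N_X)$ from $X$ in \eqref{solneq2t}--\eqref{Idcompact} upgrades this to uniqueness of the full triple $(n^*,N^*,X^*)$.

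The core of the argument is the bound $\sup_{X\in[0,p_\infty]}|\Phi'(X)|\le C\,\|\partial_X p\|_\infty$ for a constant $C$ independent of $p$. To obtain it I would differentiate the defining relation $N_X=\mathcal{T}_X[N_X]$ from \eqref{Idcompact} in $X$, using the smoothness of $X\mapsto N_X$, to get
\begin{equation*}
(I-\mathcal{T}_X)\big[\partial_X N_X\big]=\big(\partial_X\mathcal{T}_X\big)[N_X].
\end{equation*}
Because the kernel of $\mathcal{T}_X$ depends on $X$ only through $p(\cdot,\cdot,X)$, every term of $\partial_X\mathcal{T}_X$ carries one factor of $\partial_X p$, so the right-hand side is controlled by $\|\partial_X p\|_\infty\,\|N_X\|_{1}\le p_\infty\|\partial_X p\|_\infty$.

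Here $I-\mathcal{T}_X$ is not invertible, which is the main obstacle. By Lemma~\ref{kerran} its kernel is the one-dimensional span of $N_X$ and its range is exactly the mean-zero subspace $\{f:\int_0^\infty f\,da=0\}$; moreover $\mathcal{T}_X$ conserves mass (equivalently, its adjoint fixes constants, as in Lemma~\ref{kerran}), so differentiating $\int_0^\infty\mathcal{T}_X[N]\,da=\int_0^\infty N\,da$ shows the right-hand side above lies in that range and the equation is solvable. Writing $\partial_X N_X=R_X\big[(\partial_X\mathcal{T}_X)[N_X]\big]+c\,N_X$, where $R_X$ is the reduced resolvent on the mean-zero subspace and $c\in\R$, I would observe that the first term integrates to zero, so that $\Phi'(X)=\int_0^\infty\partial_X N_X\,da=c\,\Phi(X)$. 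The coefficient $c$ is pinned down by differentiating the normalization \eqref{normal2t}, which expresses $c$ in terms of $\partial_X p$ and $R_X$ only; hence $|c|\le C\|\partial_X p\|_\infty$, and the derivative bound follows, giving $|\Phi'|<1$ once $\|\partial_X p\|_\infty$ is small.

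The delicate point is to make the constant $C$ uniform in $X\in[0,p_\infty]$, which amounts to a uniform bound on $R_X=\big((I-\mathcal{T}_X)|_{\{\int=0\}}\big)^{-1}$, i.e. a spectral gap for $\mathcal{T}_X$ that does not degenerate as $X$ varies. This is where the compactness of $\mathcal{T}_X$ (Lemma~\ref{kerran}) combines with the exponential convergence from Doeblin's theory (Theorem~\ref{conveq2t}): the simple eigenvalue $1$ is isolated with a gap quantitatively controlled by the constants $\alpha,\lambda$ of Theorem~\ref{conveq2t}, which depend only on $p_0,p_\infty,\sigma$ and not on the particular value of $X$. Once this uniform gap is secured the contraction estimate closes, and the fixed point $X^*$, hence the steady state, is unique.
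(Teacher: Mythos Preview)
Your proposal is correct and follows essentially the same strategy as the paper: differentiate \eqref{Idcompact} and \eqref{normal2t} in $X$, use Lemma~\ref{kerran} to solve for $\partial_X N_X$, and bound $|\Phi'(X)|$ by a constant times $\|\partial_X p\|_\infty$ to make $\Phi$ a contraction. The only differences are presentational: you handle the non-invertibility of $I-\mathcal{T}_X$ explicitly via the reduced-resolvent decomposition $\partial_X N_X=R_X[(\partial_X\mathcal{T}_X)N_X]+cN_X$, whereas the paper packages the same step into the implicit function theorem applied to the pair (fixed-point equation, normalization); and for the uniform-in-$X$ bound on the inverse you invoke the $X$-independent Doeblin constants of Theorem~\ref{conveq2t}, whereas the paper simply uses continuity of $X\mapsto\mathcal{T}_X$ on the compact interval $[0,p_\infty]$.
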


\begin{proof}
The goal is to prove that $\Phi$ is a contraction in order to obtain a unique fixed point. In order to estimate $\partial_X N$ we make use of the implicit function theorem. By differentiating Equation \eqref{normal2t} we get
\begin{equation}
\label{normal2tx}
    \iint \partial_X(N_X)e^{-\int_0^s p(s',a+s',X)\,ds'}\,da\,ds=\iint N_Xe^{-\int_0^s p(s',a+s',X)\,ds'}\left(\int_0^s \partial_X p(s',u'+s',X)\,ds'\right)\,da\,ds.
\end{equation}
Furthermore, if we differentiate with respect to $X$ the Equation \eqref{Idcompact}, we get
\begin{equation*}
\begin{split}
    \partial_X N_X(a)&=\int_0^\infty \left(-\partial_a e^{-\int_0^a p(s',u+s',X)\,ds'}\right)\partial_X N_X(u)\,du\\
    &\quad+\int_0^\infty \left(-\partial_a\partial_X e^{-\int_0^a p(s',u+s',X)\,ds'}\right)N_X(u)\,du,
\end{split}
\end{equation*}
i.e. $\partial_X N_X(a)$ satisfies the equation
\begin{equation}
\begin{split}
    (I-\mathcal{T}_X)[\partial_X N](a)&=\int_0^\infty p(a,u+a,X)e^{-\int_0^a p(s',u+s',X)\,ds'}N_X(u)\,du\\
    &\quad-\int_0^\infty p(a,u+a,X)\left(\int_0^a \partial_X p(s',u+s',X)\,ds'\right)e^{-\int_0^a p(s',u+s',X)\,ds'}N_X(u)\,du.
\end{split}
\end{equation}
By using the implicit function theorem and the Condition \eqref{normal2tx} we can define an inverse of $I-\mathcal{T}_X$ which depends continuously on $X$. Observe that $\|(I-\mathcal{T}_X)^{-1}\|$ is uniformly bounded on $X$ in the operator norm, since $X$ is uniformly bounded. Thus, for the function $\Phi$ we get
\begin{equation*}
    \begin{split}
        |\Phi'(X)|&=\left|\int \partial_X N_X(a)\,da\right|\\
        &\le \|(I-\mathcal{T}_X)^{-1}\|\,\|\partial_X p\|_\infty\left(\iint (1+p_\infty a) e^{-\int_0^a p(s',u+s',X)\,ds}N_X(u)da\,du\right)\\
        & \le C\|\partial_X p\|_\infty \left(\iint (1+p_\infty a) e^{-p_0 a} N_X(u)\,da\,du\right)\\
        & \le C\|\partial_X p\|_\infty p_\infty \left(\int (1+p_\infty a)e^{-p_0 a}\,da\right),
    \end{split}
\end{equation*}
so that for $\|\partial_X p\|_\infty$ small enough $\Phi$ is a contraction and we conclude the result.
\end{proof}

\section{Convergence to equilibrium}
\label{convergence2t}
After studying the linear case, we are now ready to prove convergence to the steady steady under the weak interconnection regime, i.e. $\|\partial_X p\|_\infty$ small enough, by a perturbation argument.
\begin{thm}[Convergence to equilibrium]
	\label{conveq2tnl}
	Assume that $n_0\in L^1(\mathcal{D})$ satisfies Assumption \eqref{mass2t} and that $p$ Lipschitz satisfies Assumption \eqref{boundp2t}. For $\|\partial_X p\|_\infty$ small enough, let $(n^*,N^*,X^*)$ be the corresponding stationary state of System \eqref{eqmain2t}. Then there exist $C,\lambda>0$ such that the solution $n$ of System \eqref{eqmain2t} satisfies
	$$\|n(t)-n^*\|_{L^1_{s,a}}\le Ce^{-\lambda t}\|n_0-n^*\|_{L^1_{s,a}},\:\forall t\ge0.$$
	Moreover $\|N(t)-N^*\|_{L^1}$ and $|X(t)-X^*|$ converge exponentially to $0$ when $t\to\infty$.
\end{thm}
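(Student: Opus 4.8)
The plan is to view the nonlinear evolution as a perturbation of the linear semigroup obtained by freezing the total activity at the stationary value $X^*$. Write $\mathcal{L}_X$ for the generator of the linear problem \eqref{eql2t} at a fixed activity level $X$, namely the transport part together with the death term $p(\cdot,X)$ and the boundary source $\delta_{\{s=0\}}N_X[\cdot]$ in the reformulation \eqref{reformul1}, and let $P_t$ denote the Markov semigroup generated by $\mathcal{L}_{X^*}$. By Theorem \ref{uniquesteady} the nonlinear steady state $(n^*,N^*,X^*)$ exists and is unique, and $n^*=n_{X^*}$ is precisely the equilibrium of $P_t$. Since the Doeblin constants $\alpha=\tfrac12 p_0^2\sigma^2 e^{-3p_\infty\sigma}$ and $\lambda=-\log(1-\alpha)/3\sigma$ of Theorem \ref{conveq2t} depend only on $p_0,p_\infty,\sigma$, Lemma \ref{doeblinlb} and Doeblin's Theorem give, uniformly in the frozen value, the contraction of zero-mass measures
$$\|P_t\mu\|_{L^1}\le\tfrac{1}{1-\alpha}e^{-\lambda t}\|\mu\|_{L^1},\qquad\text{whenever }\langle\mu\rangle=0.$$

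First I would write the nonlinear solution through the Duhamel formula
$$n(t)-n^*=P_t(n_0-n^*)+\int_0^t P_{t-\tau}\big[(\mathcal{L}_{X(\tau)}-\mathcal{L}_{X^*})\,n(\tau)\big]\,d\tau,$$
using $P_t n^*=n^*$ and mass conservation \eqref{mass2t}, so that $\langle n_0-n^*\rangle=0$. The crucial structural point is that the perturbation $(\mathcal{L}_{X(\tau)}-\mathcal{L}_{X^*})n(\tau)$ also has zero mass, because each $\mathcal{L}_X$ generates a mass-conserving semigroup; hence the contraction above applies termwise to the right-hand side.

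Next come the two quantitative estimates. The perturbation splits into a bulk part $-[p(\cdot,X(\tau))-p(\cdot,X^*)]n(\tau)$ and a boundary part $\delta_{\{s=0\}}(N_{X(\tau)}[n(\tau)]-N_{X^*}[n(\tau)])$, and, using $\|n(\tau)\|_{L^1}=1$ together with \eqref{boundp2t}, each is bounded in $L^1$ by $\|\partial_X p\|_\infty|X(\tau)-X^*|$, giving $\|(\mathcal{L}_{X(\tau)}-\mathcal{L}_{X^*})n(\tau)\|_{L^1}\le 2\|\partial_X p\|_\infty|X(\tau)-X^*|$. To close the feedback I would control $|X(t)-X^*|$ by $\|n(t)-n^*\|_{L^1}$ exactly as in \eqref{diffN2t}: since $X(t)=\iint p(a,u,X(t))n(t,a,u)$ and $X^*=\iint p(a,u,X^*)n^*$, one obtains $(1-\|\partial_X p\|_\infty)|X(t)-X^*|\le p_\infty\|n(t)-n^*\|_{L^1}$, hence $|X(t)-X^*|\le\frac{p_\infty}{1-\|\partial_X p\|_\infty}\|n(t)-n^*\|_{L^1}$ for $\|\partial_X p\|_\infty<1$. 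Writing $u(t)=\|n(t)-n^*\|_{L^1}$ and substituting these bounds into the Duhamel identity yields
$$u(t)\le\tfrac{1}{1-\alpha}e^{-\lambda t}u(0)+\kappa\int_0^t e^{-\lambda(t-\tau)}u(\tau)\,d\tau,\qquad \kappa=\tfrac{2p_\infty\|\partial_X p\|_\infty}{(1-\alpha)(1-\|\partial_X p\|_\infty)}.$$
Multiplying by $e^{\lambda t}$ and applying Gronwall gives $u(t)\le\frac{1}{1-\alpha}e^{(\kappa-\lambda)t}u(0)$; since $\kappa\to0$ as $\|\partial_X p\|_\infty\to0$, for a weak enough interconnection one has $\lambda-\kappa>0$, which is the claimed bound with $C=\frac{1}{1-\alpha}$ and rate $\lambda-\kappa$. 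The exponential convergence of $|X(t)-X^*|$ then follows from the feedback estimate, and that of $\|N(t)-N^*\|_{L^1}$ from an estimate of the form \eqref{diffN2t}, both inheriting the same rate.

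I expect the main obstacle to be twofold. The delicate technical point is to justify the Duhamel formula and the zero-mass property of the perturbation rigorously in the measure/$L^1$ setting, where the boundary source is the singular measure along $\{s=0\}$; this is exactly where the reformulation \eqref{reformul1} is needed so that Doeblin's contraction can be applied legitimately to each term. The genuinely quantitative obstacle is the competition $\kappa<\lambda$: because $\lambda=-\log(1-\alpha)/3\sigma$ is fixed by the linear problem, one must make the smallness threshold on $\|\partial_X p\|_\infty$ explicit and compatible with the thresholds already required for well-posedness and in Theorem \ref{uniquesteady}, in order to guarantee a strictly positive net decay rate.
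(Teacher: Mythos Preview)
Your proposal is correct and follows essentially the same route as the paper: Duhamel around the frozen linear semigroup $P_t$ at $X^*$, the bound $\|h\|_{L^1}\le 2\|\partial_X p\|_\infty|X(t)-X^*|$, the feedback estimate $|X(t)-X^*|\le\frac{p_\infty}{1-\|\partial_X p\|_\infty}\|n(t)-n^*\|_{L^1}$, and then Doeblin plus Gronwall with the very same constant $\kappa$. Your explicit remark that the perturbation has zero mass (so that the Doeblin contraction applies to $P_{t-\tau}h(\tau)$) is a point the paper uses without stating.
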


\begin{proof}
Observe that $n$ satisfies the evolution equation
	$$\partial_t n=\mathcal{L}_X [n]\coloneqq-\partial_s n-\partial_a n-p(s,a,X(t))n+\delta_{\{s=0\}}(s,a)\int_0^\infty p(a,u,X(t))n(t,a,u)\,du,$$
	where $\delta_{\{s=0\}}(s,a)$ is the measure along the line $\{(0,a)\colon a\ge0\}$. 
	We can rewrite the evolution equation as
	\begin{equation}
	\label{nt=Ln}
	\partial_t n=\mathcal{L}_{X^*}[n]+(\mathcal{L}_{X}[n]-\mathcal{L}_{X^*}[n])=\mathcal{L}_{X^*}[n]+h.
	\end{equation}
	with $h(t,s,a)$ given by
	\begin{equation}
	h=\big(p(s,a,X^*)-p(s,a,X(t)\big)n(t,s,a)+\delta_{\{s=0\}}(s,a)\int_0^\infty\big(p(a,u,X(t))-p(a,u,X^*)\big)n(t,a,u)\,du.
	\end{equation}
	Let $P_t\colon L^1(\mathcal{D})\to L^1(\mathcal{D})$ be the linear semi-group associated to operator $\mathcal{L}_{X^*}$. As in the proof of Lemma \ref{linearaux2t}, $P_t$ is extended to space $(\mathcal{M}(\mathcal{D}),\|\cdot\|_{M^1})$ in order to be able to evaluate at the measure $h$. Since $P_t n^*=n^*$ for all $t\ge0$, we get that $n$ satisfies
	\begin{equation}
	\label{n-n*2}
	n-n^*=P_t (n_0-n^*)+\int_0^t P_{t-\tau}h(\tau,s,a)\,d\tau,
	\end{equation}
	so we need find an estimate for the function $h$. Observe that we have the following inequalities:
	\begin{equation*}
		\begin{matrix}
		\|h(t)\|_{L^1_{s,a}}\le 2\|\partial_X p\|_\infty|X(t)-X^*|\vspace{0.15cm},\\
		|X(t)-X^*|\le \|N(t)-N^*\|_1,\vspace{0.15cm}\\
		\|N(t)-N^*\|_1\le\|\tfrac{\partial p}{\partial X}\|_\infty|X(t)-X^*|+p_\infty\|n(t)-n^*\|_{L^1_{s,a}},
		\end{matrix}
	\end{equation*}
	and since $\|\partial_X p\|_\infty<1$ we get
	\begin{equation*}
		\begin{matrix}
		\|h(t)\|_{L^1_{s,a}}\le \frac{2p_\infty\|\partial_X p\|_\infty}{1-\|\partial_X p\|_\infty}\|n(t)-n^*\|_{L^1_{s,a}}\vspace{0.15cm},\\
		|X(t)-X^*|\le \frac{p_\infty}{1-\|\partial_X p\|_\infty}\|n(t)-n^*\|_{L^1_{s,a}}\vspace{0.15cm},\\
		\|N(t)-N^*\|_1\le p_\infty\left(\frac{\|\partial_X p\|_\infty}{1-\|\partial_X p\|_\infty}+1\right)\|n(t)-n^*\|_{L^1_{s,a}},
		\end{matrix}
	\end{equation*}
	thus by taking norm in Equality \eqref{n-n*2} and applying Doeblin's Theorem we obtain
	\begin{equation*}
	\begin{split}
		\|n(t)-n_*\|_{L^1_{s,a}}&\le \|P_t(n_0-n_*)\|_{L^1_{s,a}}+\int_0^t\|P_{t-\tau}h(\tau)\|_{L^1_{s,a}}\,d\tau\\
		&\le \frac{e^{-\lambda t}}{1-\alpha}\|n_0-n_*\|_{L^1_{s,a}}+\frac{1}{1-\alpha}\int_0^te^{-\lambda(t-\tau)}\|h(\tau)\|_{L^1_{s,a}}\,d\tau\\
		&\le\frac{e^{-\lambda t}}{1-\alpha}\|n_0-n_*\|_{L^1_{s,a}}+C\int_0^t e^{-\lambda(t-\tau)}\|n(\tau)-n^*\|_{L^1_{s,a}}\,d\tau,
	\end{split}
	\end{equation*}
	with $C\coloneqq\frac{1}{1-\alpha}\frac{2p_\infty\|\partial_X p\|_\infty}{1-\|\partial_X p\|_\infty}$. By using Gronwall's inequality with respect to the function $e^{\lambda t}\|n(t)-n^*\|_{L^1_{s,a}}$ we conclude
	\begin{equation*}
	    \|n(t)-n^*\|_{L^1_{s,a}}\le \frac{e^{-(\lambda-C)t}}{1-\alpha} \|n_0-n^*\|_{L^1_{s,a}},
	\end{equation*}
	so that for $\|\partial_X p\|_\infty$ small enough we have $C<\lambda$ and we deduce the exponential convergence of $n(t,\cdot,\cdot),N(t,\cdot)$ and $X(t)$ when $t\to\infty$.
\end{proof}

\section{Numerical simulations}
\label{numerical2t}
In order to illustrate the theoretical  long time results and other possible behaviors of System \eqref{eqmain2t}, we present numerical simulations for different firing rates and initial data. The numerical illustrations  below are obtained by solving the equation~\eqref{eqmain2t} with a classical first-order upwind scheme.

We focus in displaying the discharging flux $N(t,a)$ and the total activity $X(t)$ since these two elements determine the general behavior of system \eqref{eqmain2t}.

\subsection{Example 1: Convergence to equilibrium}
For our first example, we choose as initial data $n_0(s,a)=e^{-a}$ and the firing rate is given by
$$p=\mathds{1}_{\{s>X\}}+\mathds{1}_{\{s-a>X\}},$$
which corresponds to an inhibitory case since $p$ is decreasing with respect to $X$. Moreover, this particular form of $p$ is decomposed as the sum of two simple threshold functions with the first one depending only on the first elapsed time and the second one depending on the difference between the  last two discharges.

\begin{figure}[ht!]
	\centering
	\begin{subfigure}{0.48\textwidth}
		\includegraphics[width=\textwidth]{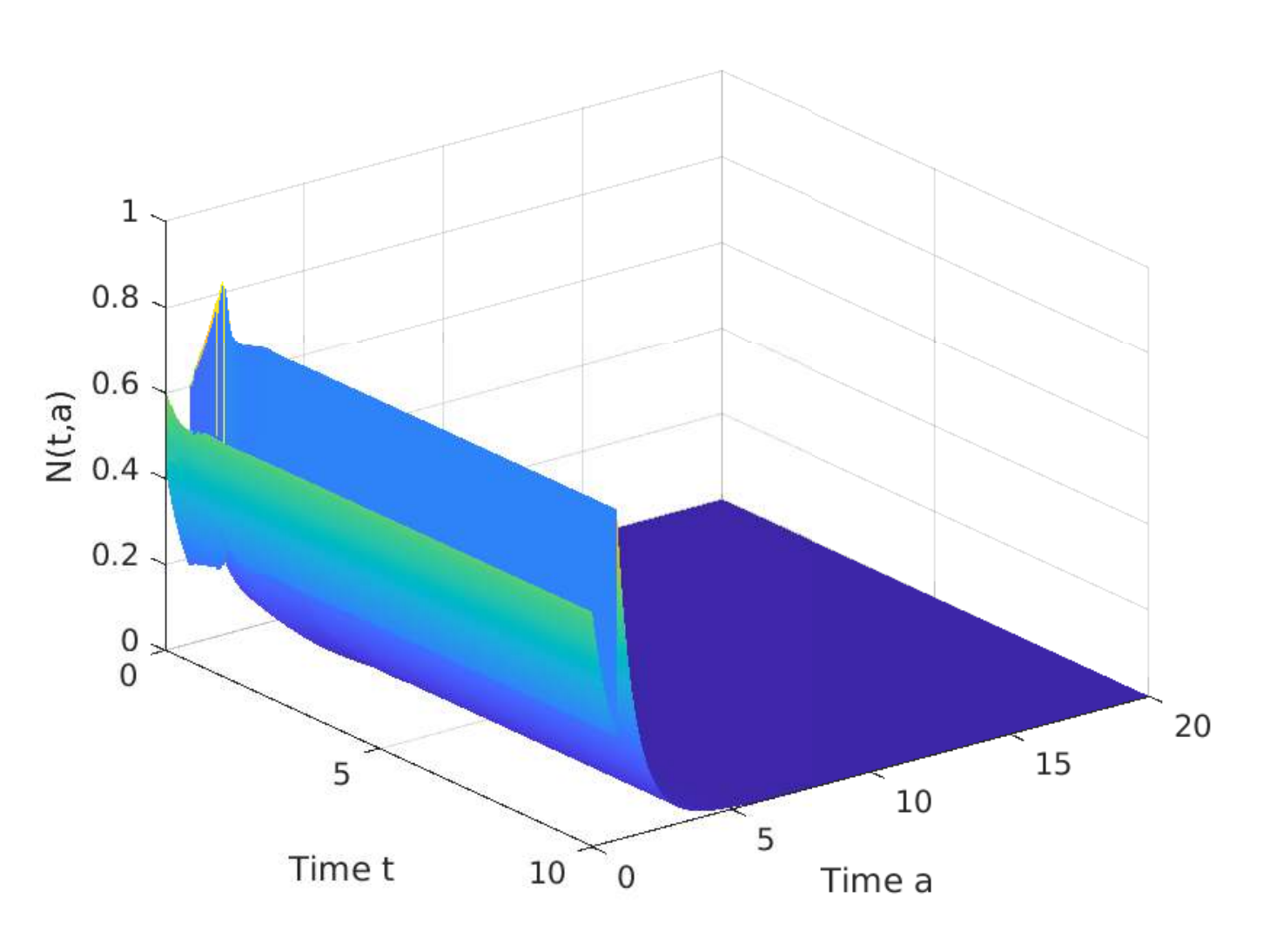}
		\caption{Activity $N(t,x)$.}
	\end{subfigure}  
	\begin{subfigure}{0.48\textwidth}
		\includegraphics[width=\textwidth]{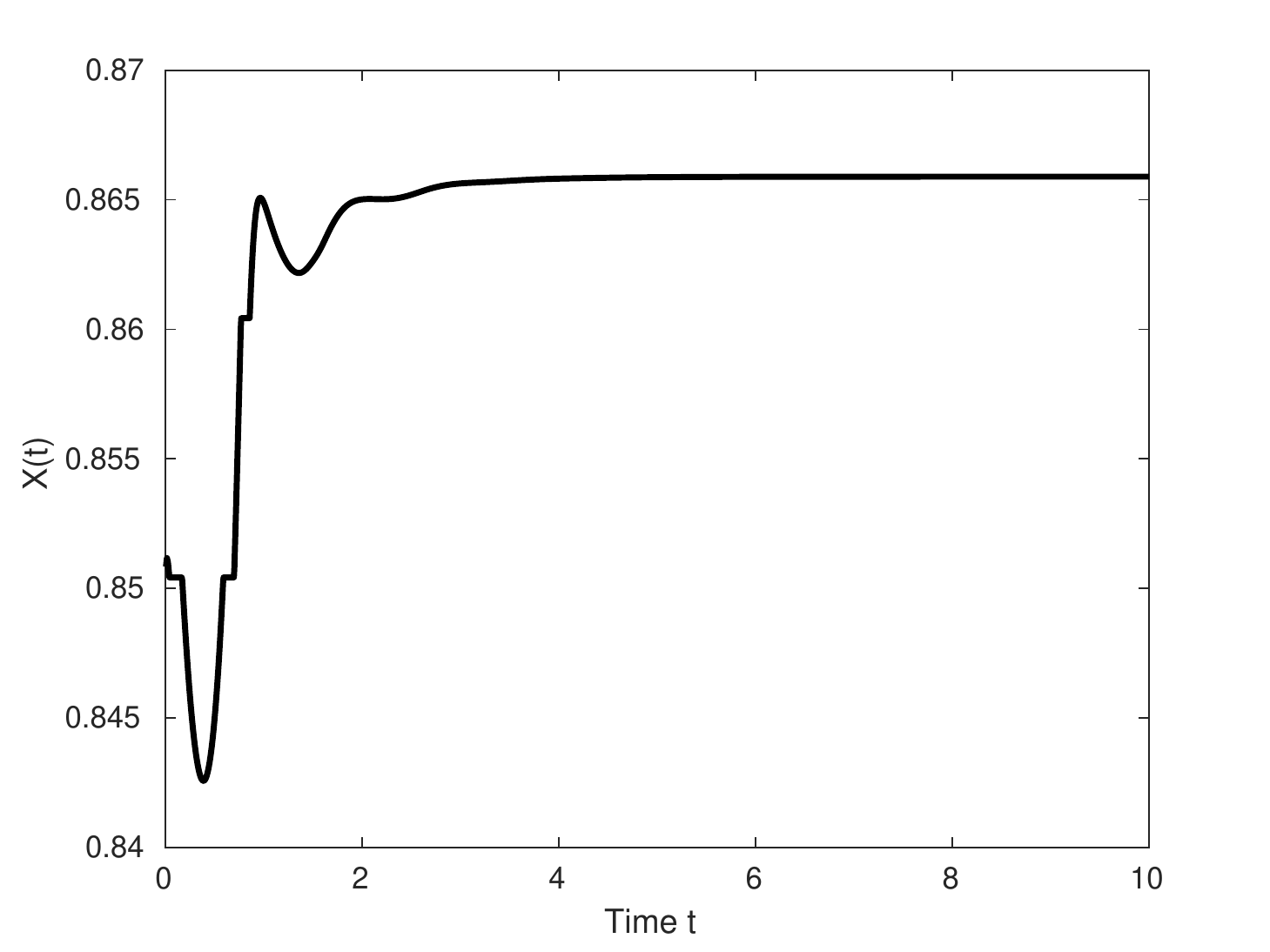}
		\caption{Total Activity $X(t)$.}
	\end{subfigure}
	 \caption{\textbf{Example 1.} Case $n_0(s,a)=e^{-a}$ and $p=\mathds{1}_{\{s>X\}}+\mathds{1}_{\{s-a>X\}}$.}
	\label{Example1-2t}
\end{figure}

In this case the solution simply converges to a steady state, as we see in Figure \ref{Example1-2t} for the discharging flux $N$ and the total activity $X$. From Equation \eqref{Idcompact} we note that the discharging flux at equilibrium $N^*$ has a jump discontinuity at $X^*$, which is consistent with the numerical solution. This convergence is compatible with Theorem \ref{conveq2tnl}.

\subsection{Example 2: Jump discontinuities}
We now consider the initial data $n_0(s,a)=2\cdot\mathds{1}_{\{2>a>s+1\}}$ and the firing rate is given 
$$p=\mathds{1}_{\{s>e^{-X}\}}+\mathds{1}_{\{s-a>e^{-X}\}},$$
which corresponds to an excitatory case since $p$ is increasing with respect to $X$.
\begin{figure}[ht!]
	\centering
	\begin{subfigure}{0.48\textwidth}
		\includegraphics[width=\textwidth]{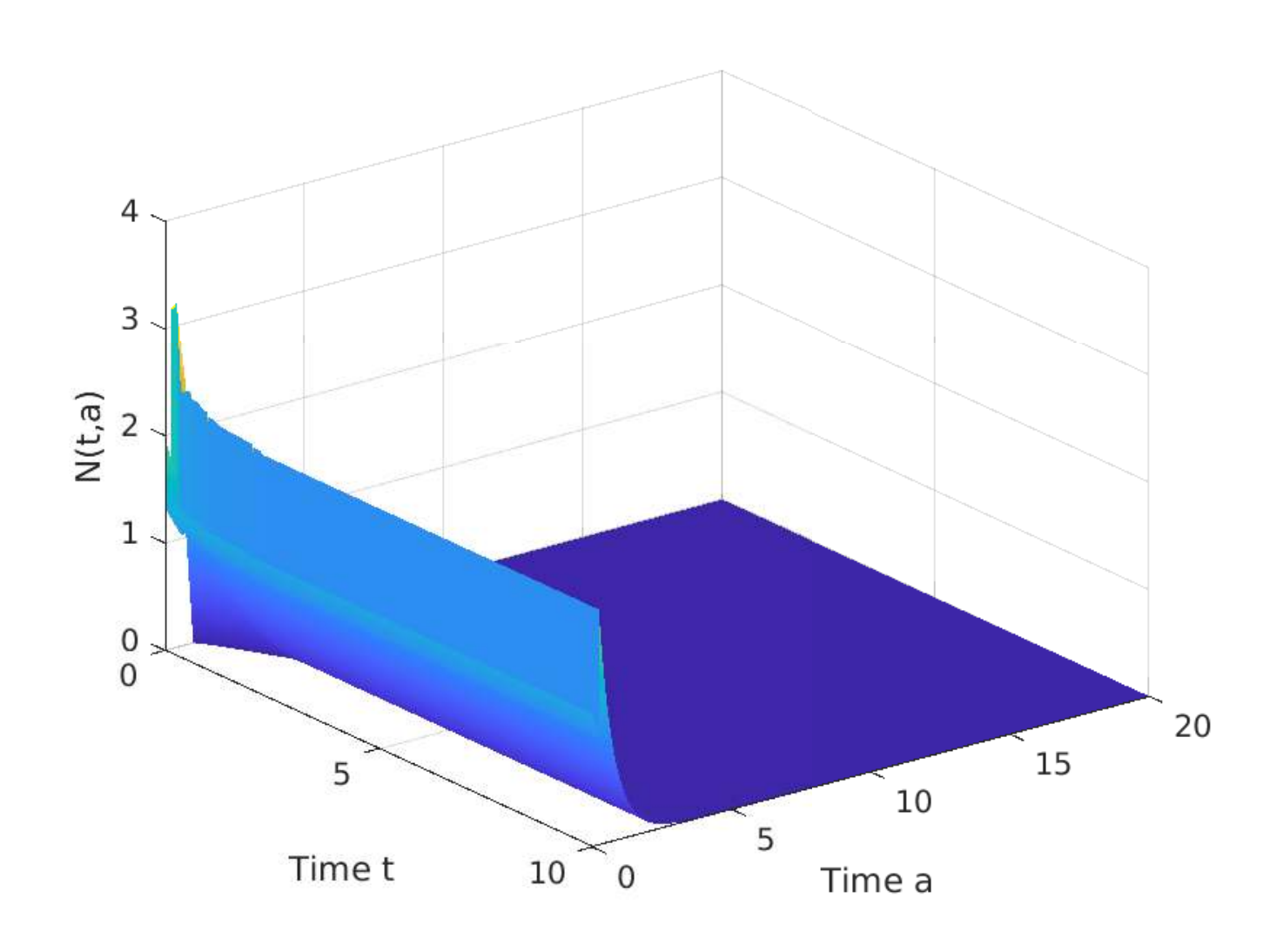}
		\caption{Activity $N(t,x)$.}
	\end{subfigure}  
	\begin{subfigure}{0.48\textwidth}
		\includegraphics[width=\textwidth]{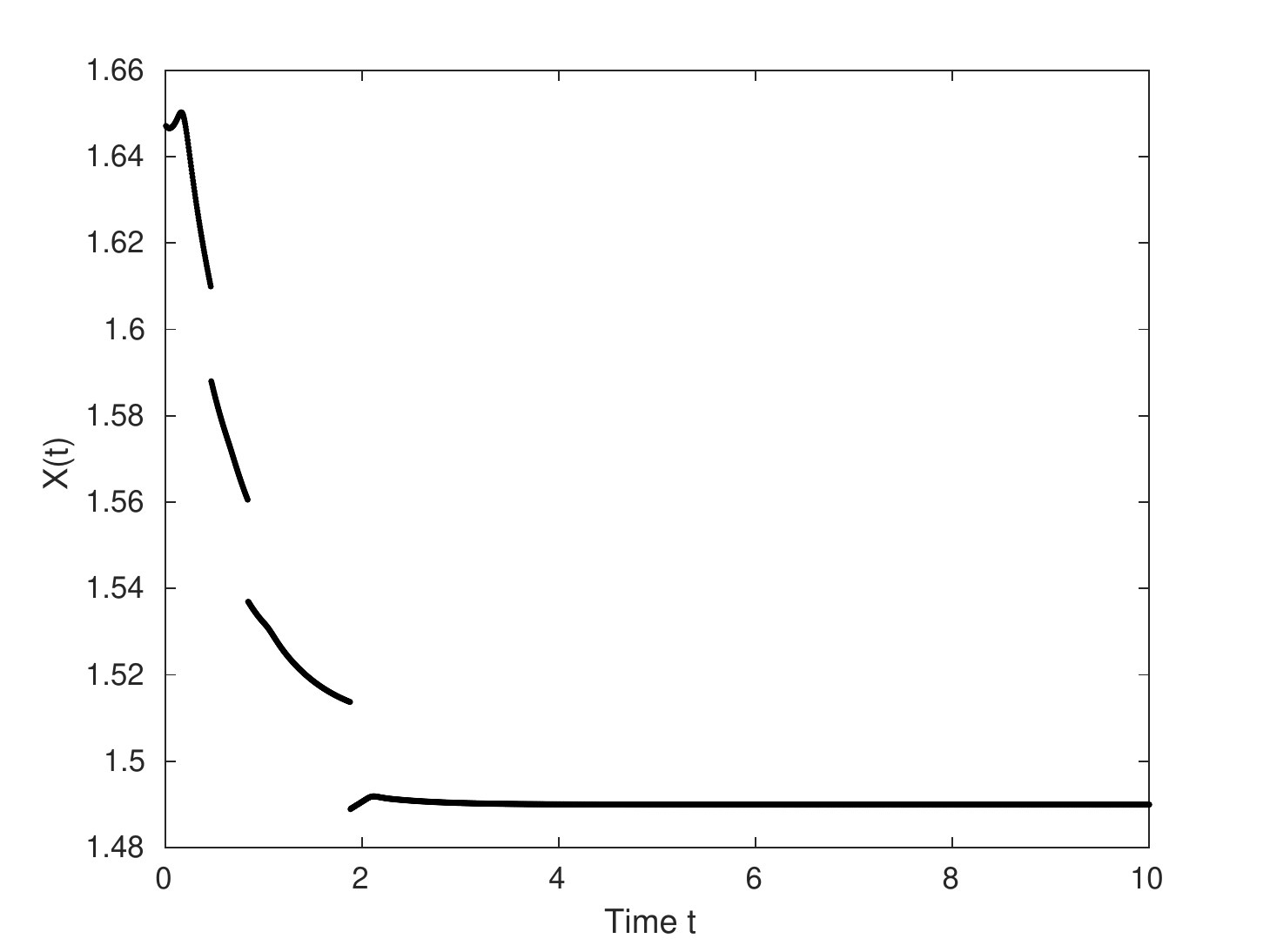}
		\caption{Total Activity $X(t)$.}
	\end{subfigure}
	\caption{\textbf{Example 2.} Case $n_0(s,a)=2\cdot\mathds{1}_{\{2>a>s+1\}}$ and $p=\mathds{1}_{\{s>e^{-X}\}}+\mathds{1}_{\{s-a>e^{-X}\}}$.}
	\label{Example2-2t}
\end{figure}

Like the previous example the solution converges to the steady state, but the total activity $X$ shows three jump discontinuities as we see in Figure \ref{Example2-2t}. The multiple jump discontinuities are consequence of the contribution of the term depending on the difference between the two elapsed times. Furthermore, solutions convergent to the steady state that present a single jump discontinuity were already observed in Caceres et al. \cite{torres2021elapsed} for the classical elapsed time model. The phenomenon of multiple jumps discontinuities in Figure \ref{Example2-2t} is an extension for the case of Equation \eqref{eqmain2t}.

\subsection{Example 3: Periodic solutions and stabilization}
Next, we choose initial data $n_0(s,a)=\frac{1}{2}e^{-(a-1)}\mathds{1}_{\{a>\max(s,1)\}}$ and the firing rate is given by
$$p=\varphi(X)\mathds{1}_{\{s>1\}},\qquad\varphi(u)=\frac{10u^2}{u^2+1}+0.5,$$
which corresponds to an excitatory case since $\varphi'(u)>0$. Since $p$ does not depend on $a$, we take advantage by solving the classical elapsed time Equation \eqref{eqm} after integrating with respect to $a$, as we remarked in the introduction.

\begin{figure}[ht!]
	\centering
	\begin{subfigure}{0.48\textwidth}
		\includegraphics[width=\textwidth]{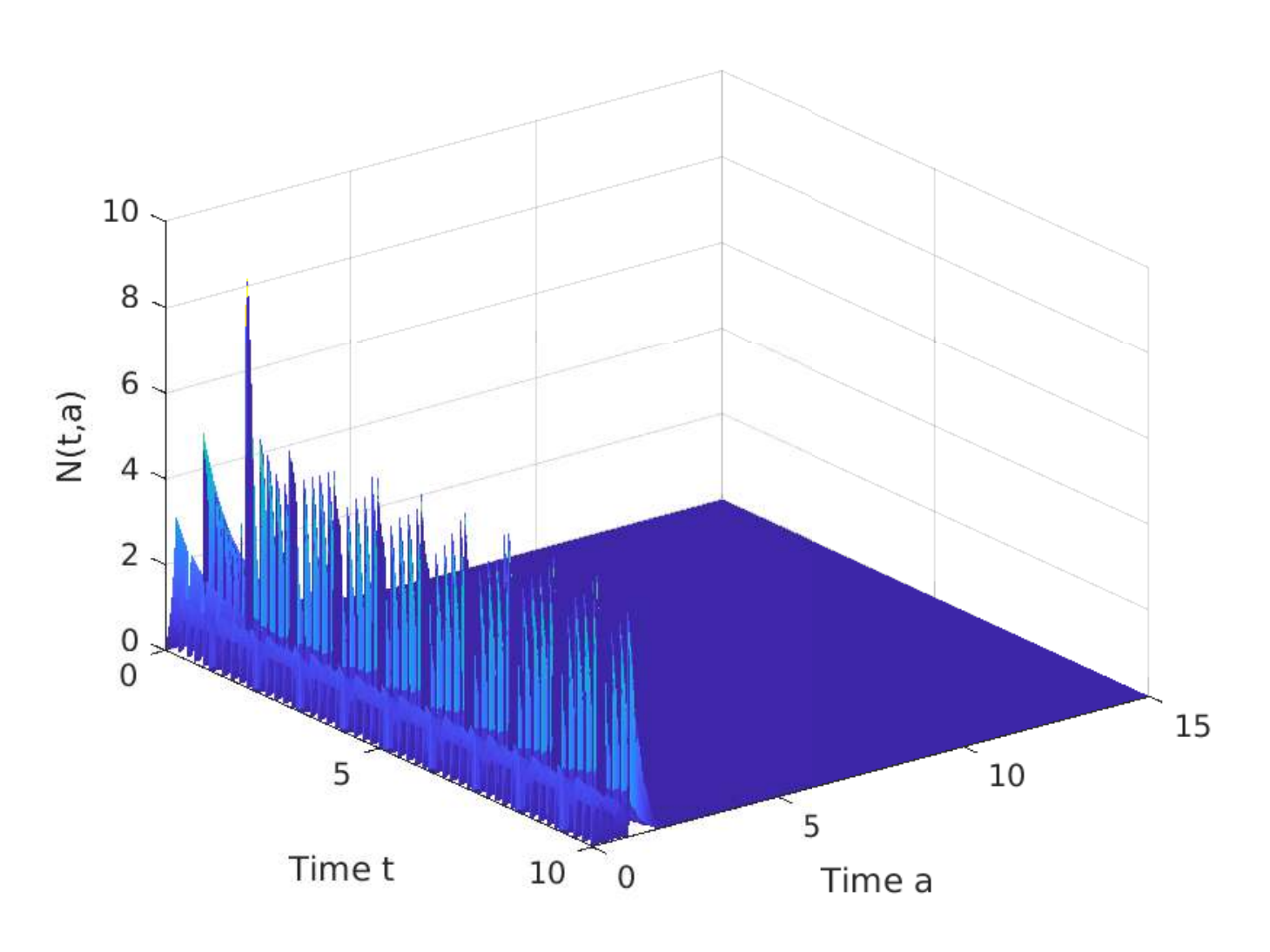}
		\caption{Activity $N(t,x)$.}
	\end{subfigure}  
	\begin{subfigure}{0.48\textwidth}
		\includegraphics[width=\textwidth]{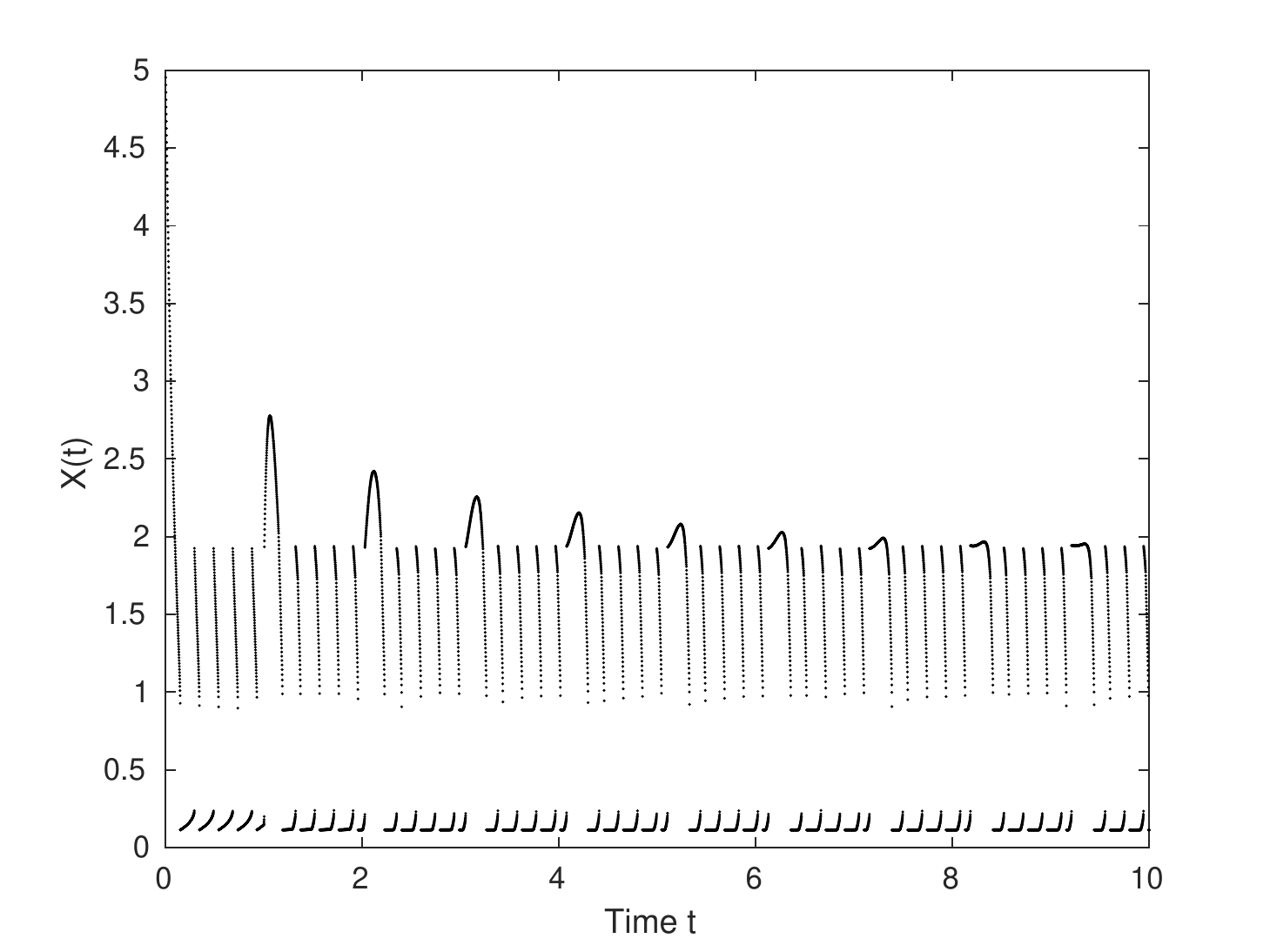}
		\caption{Total Activity $X(t)$.}
	\end{subfigure}
	\caption{\textbf{Example 3.1.} Case $n_0(s,a)=\frac{1}{2}e^{-(a-1)}\mathds{1}_{\{a>\max(s,1)\}}$ and $p=\varphi(X)\mathds{1}_{\{s>1\}}$.}
	\label{Example3a-2t}
\end{figure}
For these data, both the discharging flux $N$ and the total activity $X$ are asymptotic to a periodic pattern as we see in Figure~\ref{Example3a-2t}. Similar examples on periodic solutions were found in Caceres et al.~\cite{torres2021elapsed} in the classical elapsed time model for the same type of firing rates.

However, when we incorporate the effects of the difference between the elapsed times the periodic regime changes. For the same initial data and
$$p=\varphi(X)\mathds{1}_{\{s>1\}}+\mathds{1}_{\{s-a>X\}},$$
we observe in Figure \ref{Example3b-2t} that, with the term depending on the difference between the two elapsed times, the solution  of System~\eqref{eqmain2t}  converges to the steady state.
\begin{figure}[ht!]
	\centering
	\begin{subfigure}{0.48\textwidth}
		\includegraphics[width=\textwidth]{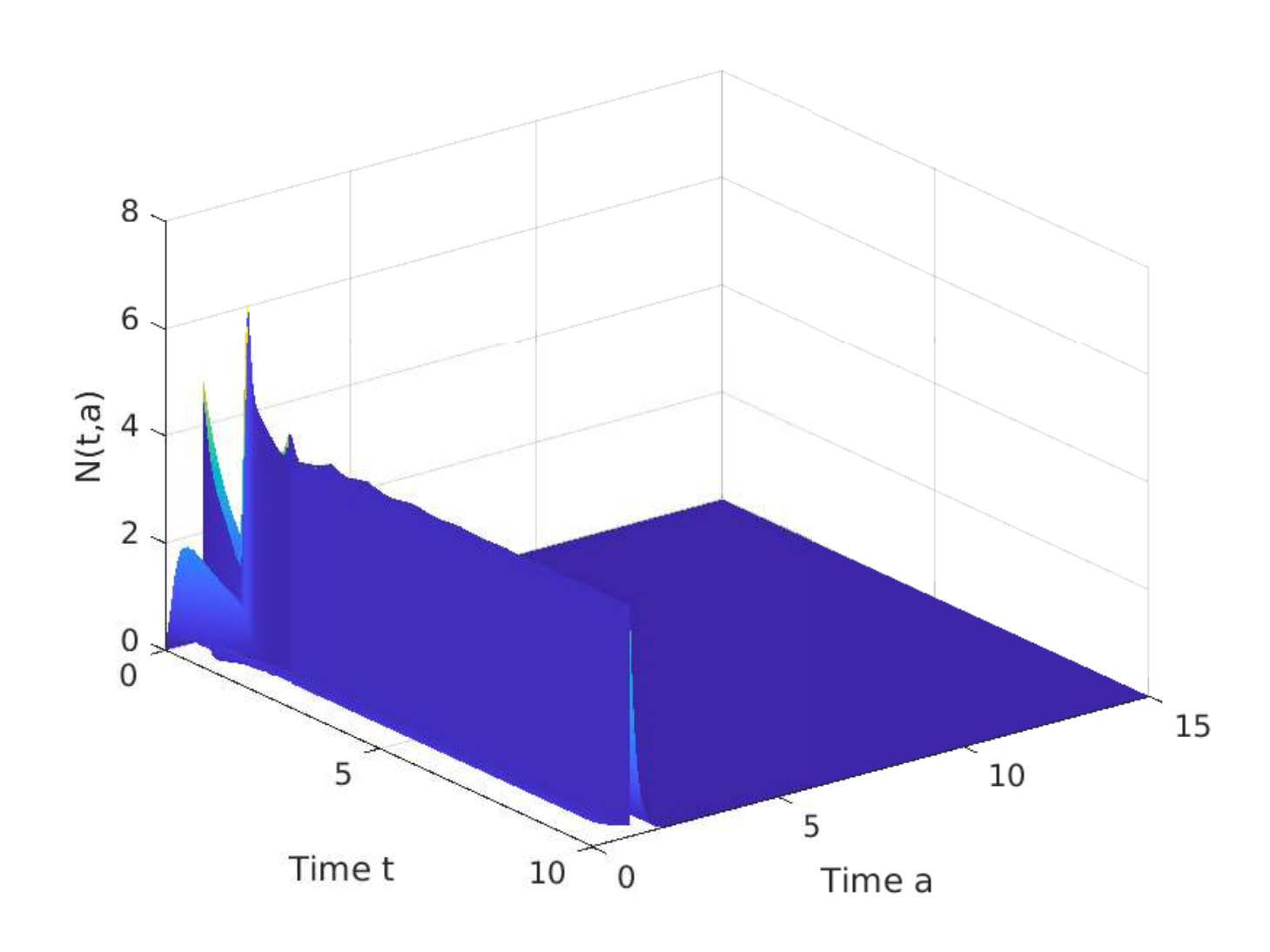}
		\caption{Activity $N(t,x)$.}
	\end{subfigure}  
	\begin{subfigure}{0.48\textwidth}
		\includegraphics[width=\textwidth]{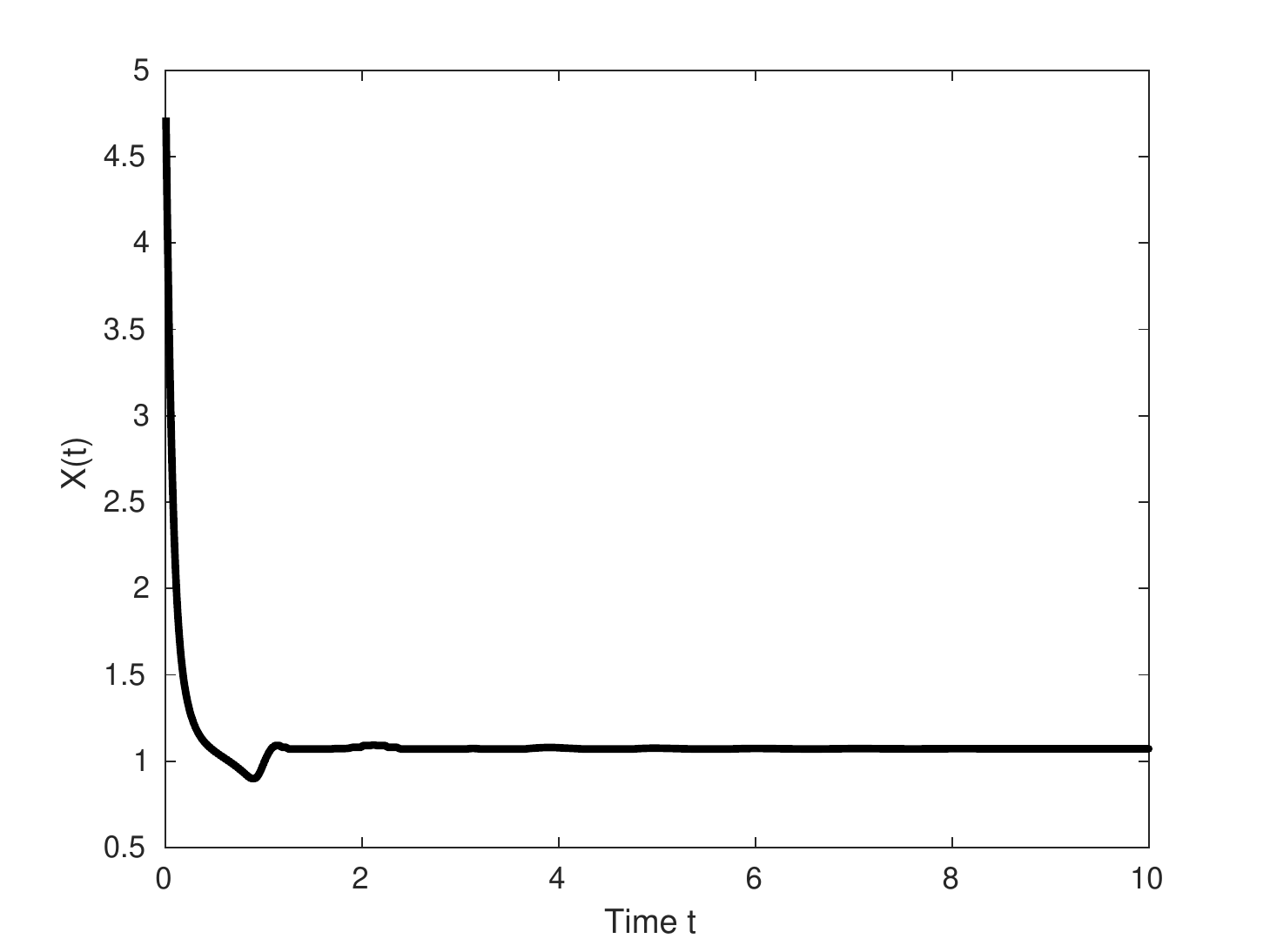}
		\caption{Total Activity $X(t)$.}
	\end{subfigure}
	\caption{\textbf{Example 3.2.} Case $n_0(s,a)=\frac{1}{2}\,e^{-(a-1)}\mathds{1}_{\{a>\max(s,1)\}}$ and $p=\varphi(X)\mathds{1}_{\{s>1\}}+\mathds{1}_{\{s-a>X\}}$.}
	\label{Example3b-2t}
\end{figure}

\section{Perspectives}
By means of Doeblin's theory applied to a more complex problem that the classical elapsed time model, we managed to understand the dynamics of System \eqref{eqmain2t} for weak non-linearities by adapting the ideas of Cañizo et al.~\cite{canizo2019asymptotic}. However, aspects such as well-posedness and the asymptotic behavior for strong interconnections are still an open problem as in the classical elapsed time model.

Concerning the strongly inhibitory case, it remains pending to prove uniqueness of the steady state. Whilst in the classical elapsed time model this problem is reduced to a simple equation, for the model with two elapsed times we have to prove uniqueness for the integral Equation \eqref{eqphi}. Moreover, we conjecture in the general case that the speed of convergence to a steady state must be exponential like it is expected for the classical elapsed equation.

With respect to the existence of periodic solutions, we still have to find or construct a non-trivial example relying on dynamics for two elapsed times. The only examples we have found so far are adaptations of solutions of the classical elapsed time equation that were obtained in Caceres et al.~\cite{torres2021elapsed} and these types of solutions presents jump discontinuities, making them difficult to analyze. Furthermore, it remains as an open problem to find continuous periodic solutions as in the classical elapsed time model. 

\section*{Acknowledgements}
NT has received funding from the European Union's Horizon 2020 research and innovation program under the Marie Sklodowska-Curie grant agreement No 754362. BP has received funding from the European Research Council (ERC) under the European Union's Horizon 2020 research and innovation programme grant agreement No 740623. DS has received support from ANR ChaMaNe No: ANR-19-CE40-0024.
\\[-35pt]
\begin{center}
    \includegraphics[scale=0.05]{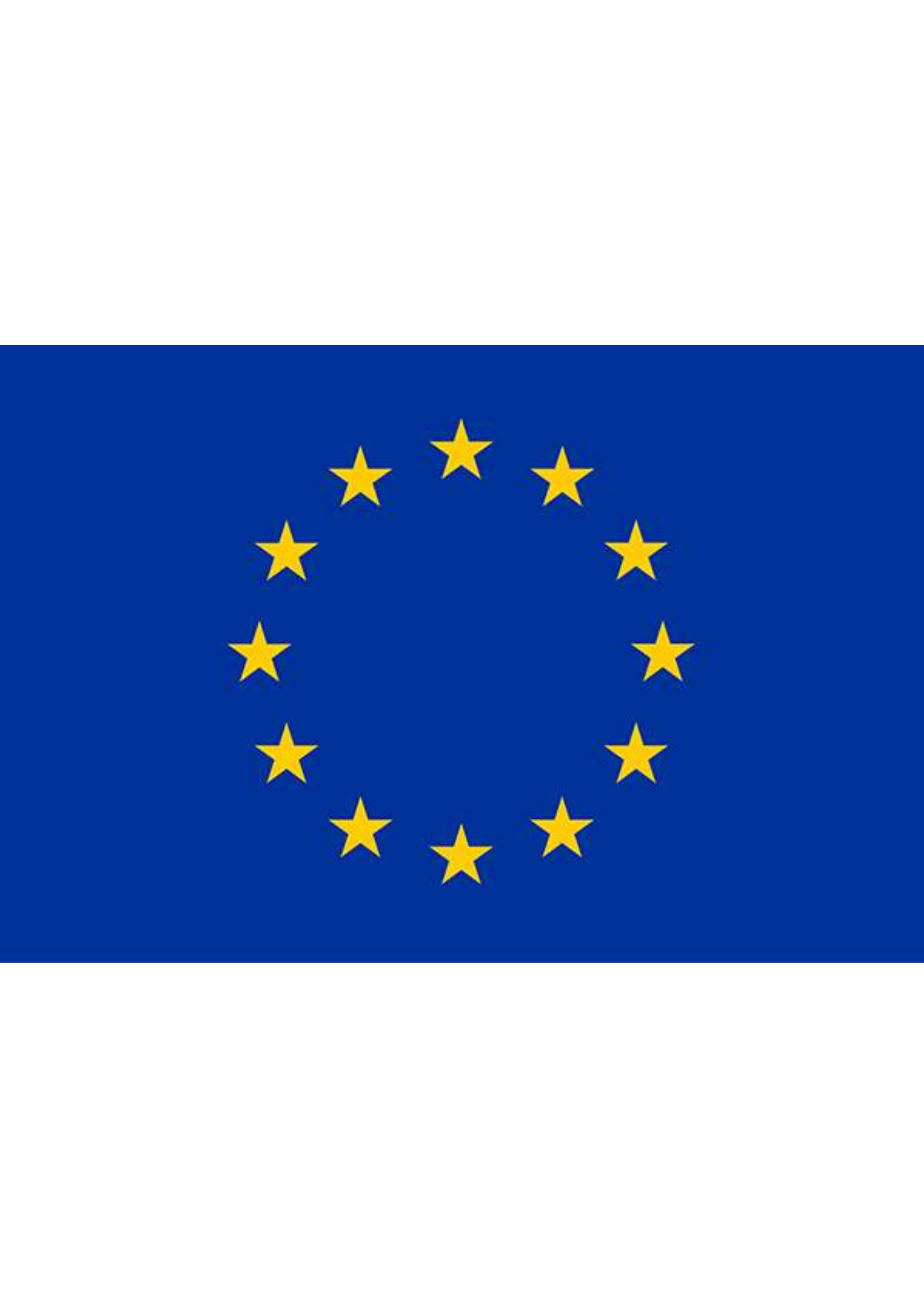}
\end{center}


\bibliography{Thesis.bib}
\bibliographystyle{plain}
\end{document}